\documentclass{article}

\title{Stochastic Mackey--Glass Equations and Other  Negative Feedback Systems: Existence of Invariant Measures} 
\author{M. van den Bosch$^{\rm a, }$\footnote{Corresponding author.\\    Email addresses: \url{mark-bosch@hotmail.com}, \url{vangaans@math.leidenuniv.nl} and \url{S.M.VerduynLunel@uu.nl}.\\ To appear in SIAM
Journal on Applied Dynamical Systems.}\,\,, O.\,W. van Gaans$^{\mathrm a}$, S.\,M. Verduyn Lunel$^{\mathrm b}$}

\date{\today}

\usepackage{todonotes}
\usepackage{lipsum} 

\usepackage{subfiles}
\usepackage{xr}

\usepackage{geometry}
\usepackage[utf8]{inputenc}
\usepackage[english]{babel}
\usepackage{graphicx} 
\usepackage{xcolor}

\usepackage{mathtools} 
\usepackage{amsmath} 
\usepackage{amssymb} 
\usepackage{amsthm}
\usepackage{xspace}
\usepackage{enumerate}
\usepackage{mathrsfs}  

\usepackage[hidelinks]{hyperref}
\usepackage{biblio}
\numberwithin{equation}{section}
\newcommand\yesnumber{\addtocounter{equation}{1}\tag{\theequation}}

\newtheorem{theorem}{Theorem}[section]
\newtheorem{lemma}[theorem]{Lemma}
\newtheorem{proposition}[theorem]{Proposition}
\newtheorem{corollary}[theorem]{Corollary}

\theoremstyle{definition}
\newtheorem{definition}[theorem]{Definition}
\newtheorem{remark}[theorem]{Remark}
\newtheorem{assumption}[theorem]{Assumption}

\newtheorem{examplex}[theorem]{Example}
\newenvironment{example}
{\pushQED{\qed}\examplex}
{\popQED\endexamplex}

\newcommand{\cadlag}{càdlàg\xspace}
\newcommand{\caglad}{càglàd\xspace}
\newcommand{\R}{\mathbb{R}}
\newcommand{\Rplus}{[0,\infty)}
\newcommand{\tauRplus}{\mathbb D[-\tau,\infty)}
\newcommand{\taunul}{\mathbb D[-\tau,0]}
\newcommand{\N}{\mathbb{N}}
\newcommand{\dint}[3]{\int_{#1}^{#2}{#3} \, \mathrm{d}}

\def\gb #1{\bigl( #1 \bigr)}
\def\ph {\varphi}

\def\ph
{\varphi}

\def\th
{\theta}

\usepackage{tcolorbox}

\tcbuselibrary{breakable}
\tcbset{
  width=0.6\textwidth,
  halign=justify,
  center,
  breakable,
  colback=white, 
  colframe=black!50
}

\begin{document}

\maketitle
\begin{center}\small
    \textsc{
    $^{\mathrm a}$Mathematical Institute,  Leiden University,\\ P.O. Box 9512, 2300 RA Leiden, The Netherlands}

    \
    
    \textsc{$^{\mathrm b}$Department of Mathematics, University of Utrecht,\\ P.O. Box 80010, 3508 TA Utrecht, The Netherlands}
\end{center}

\

\begin{abstract}
   \noindent We study
   equations like the Mackey--Glass equations and Nicholson's blowflies equation, each perturbed by a (small) multiplicative noise term. Solutions to these stochastic negative feedback systems persist globally  and are bounded above in probability under mild assumptions. 
   A non-trivial invariant measure is proved to exist if and only if  there is at least one  initial condition  for which the solution remains bounded away from zero in probability. The noise driving the dynamical system is allowed to be a square integrable Lévy process with finite intensity. Existence of   invariant measures is obtained via the Krylov--Bogoliubov method.   In addition to our theoretical results, we present numerical simulations identifying the invariant measures obtained via the Krylov–Bogoliubov method and illustrating their connection to the system’s long-term behaviour.
\end{abstract}

\textsc{Keywords:} {\footnotesize{stationary solutions; invariant measures;  bounded in probability; It\^o and L\'evy integrals;  global existence and uniqueness; permanence property; periodic orbits; chaos.}}

\section{Introduction}

In this paper, we investigate the existence of invariant (probability) measures for stochastic delay differential equations whose deterministic counterpart is of the form 
\begin{equation}
	 x'(t)=-\gamma (t)x(t)+r(t)f(x(t-\tau)),\label{eq:delay-1}
\end{equation}
where $\tau>0$ characterises a fixed time delay. Viewing \eqref{eq:delay-1} as a population dynamics model,  $\gamma(t)$ and $r(t)$ can be interpreted as  time-dependent  mortality and reproduction rates, respectively; they are assumed to be measurable, positive for all  $t\geq 0$, and are usually   constant. We take  $f:\R\to\R$  to be a non-negative continuous function.  
 By a  time rescaling argument, one may set $\tau=1$ without loss of generality.


A wide variety of
 systems  exist in which   (quasi-)periodic    behaviour   of certain components
 have been  observed \cite{kolmanovskii2013introduction}. A  possible mechanism causing the latter is negative feedback  \cite{glass1990chaos,milton1989complex}. 
 Negative feedback occurs whenever some function of the  output of a system is  fed back into the system in such a way that it tends to reduce the increase or fluctuations of the output.
Numereous homeostatic processes in physiological systems rely on negative feedback in order to control the concentration of substances in   blood. Breathing, for  example, is stimulated   whenever the brain detects|with a certain delay|a high carbon dioxide concentration  in    blood \cite{berezansky2012mackey,glass1979pathological}. Throughout this work, the time delay is assumed to be  only within the reproduction term. Our usage of the terminology  ``negative feedback'' should not be confused with that of  \cite{book:hale,krisztin2008global,rost2007domain}. Following the terminology there, the Mackey--Glass equation|with $q=1$|and the Nicholson's blowflies equation are examples of unimodal feedback.


The well-known Mackey--Glass equations are given by system \eqref{eq:delay-1} after setting the nonlinearity  $f:[0,\infty)\to\mathbb R$ to\footnote{It is not difficult to see that \eqref{eq:Mackey-Glass} can be smoothly extended to the negative reals $(-\infty,0)$ in such a manner that   $f$ becomes a globally Lipschitz functional on $\mathbb R.$} 
\begin{equation}
	 f(x)=\frac{x^q}{1+x^p},\qquad p\geq 1,\quad q\in\{0,1\}.\label{eq:Mackey-Glass}
\end{equation} 
 These equations were initially  proposed to model the concentration of white blood cells \cite{glass1979pathological,article:mackeyglass}. The delay term was introduced to account for the significant time lag between detecting low white blood cell concentration and the bone marrow producing and releasing mature cells into the blood. Numerical observations showed relevant cycling behaviour for 
$q=1$, providing heuristic justification for the model. While unimodal feedback
($q=1$) can lead to chaotic dynamics, it is not possible with monotone feedback ($q=0$) \cite{rost2007domain}.

The class \(\eqref{eq:delay-1}\) of delay differential equations arises as a mathematical model for various other fields as well, e.g., population dynamics. The Nicholson's blowflies equation, given by
\begin{equation}
    x'(t) = -\gamma(t)x(t) + r(t)x(t-1)e^{-px(t-1)}, \qquad p>0,\label{eq:Nicholson}
\end{equation}
 was introduced to model isolated laboratory insect populations \cite{gurney1980nicholson} and  $x(t)$ denotes the population of sexually mature adults at time $t$. The associated functional $f(x) = xe^{-px}$ satisfies $f(x)\to 0$ as $x\to\infty$, which corresponds to intraspecific competition. Nicholson's experiments on Australian sheep blowflies showed cycles with periods close to the generation time \cite{nicholson1954outline}. This (quasi-)periodic behaviour is attributed to a delay of $\tau$ $(\tau = 1)$ time units, representing the time needed for an egg to hatch and  for the   organism to   develop into a sexually mature adult. Equation \eqref{eq:Nicholson} replicates these  cycles, as demonstrated in \cite{gurney1980nicholson}.

\paragraph{Key objectives} 
Chaos  has  not yet been rigorously proved for the Mackey–Glass equation or Nicholson’s blowflies equation \cite{henot2023numerical,krisztin2008global,walther2020impact}, though it has been demonstrated for other non-monotone feedback systems; see \cite{lani1996chaotic}. 
 The complexity  of these equations have attracted significant attention lately.  
Recently,  existence of global attractors,  including some of its properties, has  been established  \cite{berezansky2013mackey,krisztin2020periodic,liz2008global,LizRost10,rost2007domain,wei2007bifurcation}, but these only concern  steady states and periodic orbits. In the case of monotone feedback systems like Wright's equation,  results on global attractors are already known for a while now \cite{book:hale,walther19952}. Even more recently, the article \cite{henot2023numerical} establishes the existence of a transverse homoclinic orbit, and hence a hyperbolic horseshoe with an abundance of chaotic trajectories. While this does not yield a strange attractor, whose presence would imply (physical) invariant measures \cite{eckmann1985ergodic}  and sustained long-time chaos, it does represent the current state of the art. Over the past fifty years, many theoretical and numerical advancements have been achieved in this area; see \cite{bartha2021stable,article:MGanalysis,berezansky2010nicholson,duruisseaux2022bistability,farmer1982chaotic,book:hale,henot2023numerical,junges2012intricate,kiss2017controlling,kojima2025resonance,krisztin2008global,lasota1977ergodic,liz2008global,LizRost10,lossom2020density,mitkowski2012ergodic,rost2007domain,walther2020impact} and the  references therein. 

In this paper, we contribute to the literature by following a probabilistic approach to explore the rich dynamics of equation \eqref{eq:delay-1}, as suggested in the monograph \cite{lossom2020density}. We aim to uncover ``hidden" structures in \eqref{eq:delay-1} by means of proving the existence of a \textit{non-trivial} invariant measure; this measure is distinct from the  Dirac measure 
$\delta_0$,
  associated to the fixed point 
$x=0$ in case $f(0)=0$. While we cannot  confirm the presence of a strange attractor with a mathematical proof so far, our analytical findings (in combination with numerics) offer a partial resolution to a major open problem by relating order to chaos via ergodicity. 

Robustness of these structures  is illustrated by stochastically perturbing  \eqref{eq:delay-1}
 and searching for non-trivial invariant measures within these stochastic negative feedback systems, as was done with Wright's equation \cite{BosGaaVer1-24}. The former work primarily focuses on the necessary estimates, highlighting the novel approach where one needs to understand the reverse time supremum of It\^o-driven processes with negative drift (see also {\S}\ref{Sec5}), and cites the results in this paper regarding invariant measures, tightness, boundedness in probability,  and (local) existence and uniqueness of solutions. In here, we aim to be comprehensive by developing general tools and results useful to others in the field.




\paragraph{Noisy systems} 

Let us discuss the class of stochastic perturbations of 
\begin{equation}
	 x'(t)=-\gamma (t)x(t)+r(t)f(x(t-\tau)),\quad x_0=\ph,\,\ph\in D[-\tau,0],\label{eq:delay-2}
\end{equation}
 which we  analyse in this paper. Our results  do not restrict to Brownian noise only;  see {\S}\ref{sec:typesnoise} for the noise processes we allow.  We write  $D\Rplus$, $D[-\tau,\infty),$ and $D[-\tau,0]$ for the spaces of \cadlag functions defined on $[0,\infty)$, $[-\tau,\infty)$, and $[-\tau,0]$,  respectively, along with $C\Rplus$, $C[-\tau,\infty),$ and $C[-\tau,0]$  being the  spaces of  continuous functions. We denote by $x_t=(x(s))_{s\in[t-\tau,t]}$ the segment of the solution at time $t.$  See \cite[App. B]{artikel1-general} for properties of  the segment spaces $C[-\tau,0]$ and $D[-\tau,0]$.
 
Under  mild assumptions, one can show in line with  \cite{article:MGanalysis}  that  initial value problem \eqref{eq:delay-2} satisfies the permanence property\footnote{When $f(0)>0$ holds, as for the monotone Mackey--Glass equations $(q=0)$, one can drop the $\ph(0)>0$ condition.}: for any non-negative and non-zero initial function $\ph$ with $\ph(0)>0$ the corresponding solution is positive for all $t\geq 0$.  In order to find a class of stochastic perturbations that preserve this positivity property, we use the same idea as for Wright's equation in \cite{BosGaaVer1-24}. We consider the transformation 
\begin{equation}y(t) = \log x(t),\quad x(t) = e^{y(t)},\label{eq:transformation}\end{equation}
and the transformed delay differential equation becomes
\begin{equation}\label{eq:t-mono-MG2}
y'(t) = - \gamma(t) + r(t)e^{-y(t)}f(e^{y(t-\tau)}).
\end{equation}

Throughout this paper, we analyse the equation \eqref{eq:stoch-t-mono-MG2}. That is, noise is added to the transformed system \eqref{eq:t-mono-MG2}  in the It\^o-sense \cite{book:karatzas,book:mao,book:revuz} and we include an extra drift term:
\begin{equation}\label{eq:stoch-t-mono-MG2}
\mathrm dY(t) = \bigl[- \gamma(t) + r(t)e^{-Y(t)}f(e^{Y(t-\tau)})]\mathrm dt + \underbrace{a(Y_t,t)\,\mathrm dt}_{\substack{\text{artificial}\\[.1cm]\text{drift\,term}}} + \underbrace{b(Y_t,t) \,\mathrm dW(t)}_{\text{noise term}},
\end{equation}
where $W$ is a standard Brownian motion on a filtered  probability space $(\Omega,\mathcal F,\mathbb F,\mathbb P)$,   $\mathbb F=(\mathcal F_t)_{t\geq 0},$ satisfying the usual conditions and $a,b : C[-\tau,0]\times \mathbb R \to \mathbb R$ are locally Lipschitz  with respect to the supremum norm $\|\cdot\|_\infty$ in the first coordinate. In case one replaces $W$ by, e.g., a  L\'evy process, we   substitute $C[-\tau,0]$ by $D[-\tau,0]$ and a few  additional assumptions on $a$ and $b$ are required; see \cite[Sec. 2]{artikel1-general}. In this more general setting, the rates $\gamma(t)$ and $r(t)$ need to be \cadlag  to ensure well-posedness. 

To see the effect of the noise term in \eqref{eq:stoch-t-mono-MG2} in terms of the original variables, we   use It\^o's formula  with $X(t)=\Psi(Y(t))$ and $\Psi(y) = e^y$ to conclude that
\begin{equation}
\begin{aligned}
\mathrm dX(t)&= \Psi'(Y(t))\,\mathrm dY(t) + \tfrac{1}{2}\Psi''(Y(t))b(Y_t,t)^2\,\mathrm dt\\
 & =\big[-\gamma(t)X(t)+r(t)f(X(t-\tau))\big]\mathrm dt\\&\qquad\quad+X(t)\big[\tilde a(X_t,t)+\tfrac12\tilde b(X_t,t)^2\big]\,\mathrm dt+X(t)\tilde b(X_t,t)\,\mathrm dW(t),\label{eq:stoch-x}
\end{aligned}
\end{equation}
where for $\ph \in C[-1,0]$ with $\ph > 0$ we have  defined
\begin{equation}
\tilde a(\ph) = a\gb{\th \mapsto \log \ph(\th)}\quad\hbox{and}\quad\tilde b(u) = b\gb{\th \mapsto \log \ph(\th)}.
\end{equation}
Therefore, in order to merely add a multiplicative noise term to the original equation \eqref{eq:delay-2}, we need to impose the following condition on the coefficients $a$ and $b$ in  \eqref{eq:stoch-t-mono-MG2}:
\begin{equation}\label{eq:neg-drift}
a(\ph,t) = -\frac{1}{2}b(\ph,t)^2,\quad \ph\in C[-\tau,0].
\end{equation}
In other words, adding noise to  \eqref{eq:delay-2} results into a negative drift term in the transformed equation \eqref{eq:stoch-t-mono-MG2}. Further, note that if we start with  \eqref{eq:stoch-x} and assume that the functionals $\tilde a$ and $\tilde b$  are locally Lipschitz in the first coordinate, then  this holds true for $a$ and $b$ in \eqref{eq:stoch-t-mono-MG2} as well. 
Ultimately, if  we replace $W$ by a L\'evy process $L$ of finite intensity, i.e., $\nu(\mathbb R)<\infty$ where $\nu$ denotes the L\'evy measure of $L$, then imposing the condition
\begin{equation}
    a(\ph,t)=-\frac{b(\ph,t)^2}{2}+b(\ph,t)\int_{|z|\leq 1}z\nu (\mathrm dz),\quad \ph\in D[-\tau,0],
\end{equation}
 gives rise to a L\'evy stochastic differential equation \cite{book:applebaum,article:jacob,book:sulem} of the form
\begin{equation}
	\begin{aligned}
	\mathrm d X(t) 
	&=\left[-\gamma(t) X(t)+r(t)f(X(t-\tau))\right] \mathrm d  t+ \sigma X(t)\tilde b(X_{t},t ) \,\mathrm  d W(t) \\& 
	 \hspace{4cm}+\int_{\mathbb{R}}X(t-)\big[\exp \big(\tilde b (X_{t-},t- )\,z\big)-1\big]  {\mu_L}(\mathrm d t, \mathrm d z), \label{eq:levysde-intro}
\end{aligned} 
\end{equation} 
 where $\mu_L$ is the random jump measure \cite{book:billingsley,book:jacod,book:kyp,book:sato} associated to the L\'evy process $L$. Observe that applying an It\^o's formula as in \cite{book:protter} does not directly result into a genuine differential equation.

\paragraph{Main results}  We will mainly analyse the transformed stochastic equation \eqref{eq:stoch-t-mono-MG2} throughout this paper, as it allows us to find and search for
\textit{non-trivial} invariant measures\footnote{We first show the existence of an invariant measure $\nu$ on $D[-\tau,0]$, implying the existence of a stationary distribution $\mu$ which is an invariant measure on $\mathbb R.$ In this paper, we keep a clear distinction between the two terminologies;  see {\S}\ref{sec:typesnoise}}. In this work, note that  non-trivial means that it is distinct from the trivial measure $\delta_0$, associated to the fixed point $x=0$ when $f(0)=0$. Existence of invariant measures  is achieved via the Krylov--Bogoliubov procedure. For this  we use results on generic stochastic delay differential equations from our other article \cite{artikel1-general}. Looking at the transformed equation \eqref{eq:stoch-t-mono-MG2} prevents us from constructing $\delta_0.$

Limiting ourselves to   Brownian noise  yields the following theorem. Similar results hold for certain types of L\'evy processes; see {\S}\ref{Sec6}. 


\begin{theorem}[see {\S}\ref{Sec6}]\label{main-thrm}
    Suppose $f:\mathbb R\to \mathbb R$ is  locally Lipschitz continuous, non-negative on $(0,\infty)$, and bounded from above. Assume $\inf_{t\geq 0}\gamma(t)>0$, $\sup_{t\geq 0}r(t)<\infty$ and  $\sigma\geq 0$. 
    Then the solution to
    \begin{equation}
        \mathrm dX(t)=[-\gamma(t) X(t)+r(t)f(X(t-\tau))]\,\mathrm dt+ \sigma X(t)c(X_t)\,\mathrm dW(t),\quad X_0=\Phi,\label{eq:delay-noise}
    \end{equation}
  where $c:C[-\tau,0]\to\mathbb R$ is  bounded and  locally Lipschitz  with respect to the supremum norm $\|\cdot\|_\infty$, is
unique,   persists globally, remains positive, and is bounded in probability for almost every  positive $\mathcal F_0$-measurable random variable $\Phi$ taking values in $C[-\tau,0]$.
 Furthermore, when $\gamma$ and $r$ are constants and if
    \begin{itemize}
        \item  $f(0)>0,$ then almost every solution is bounded away from zero in probability and there  exists a stationary distribution $\mu$ with $\mu((0,\infty))=1;$ 
        \item  $f(0)=0,$ then there exists at least two distinct stationary distributions, namely the Dirac measure $\delta_0$ and a   stationary distribution $\mu$ with $\mu((0,\infty))=1,$ provided that there is at least one initial value $\Phi$ such that  the solution is bounded away from zero in probability.
    \end{itemize}
    In fact, the distribution $\mu$   is the push-forward measure of an invariant measure $\nu$ on $C[-\tau,0]$ under the evaluation map $C[-\tau,0]\to\mathbb R, \varphi\mapsto \varphi(0).$
   
\end{theorem}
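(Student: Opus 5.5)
The plan is to work throughout with the logarithmic transformation \eqref{eq:transformation}. For positive initial data $\Phi$ we set $Y=\log X$ and $Y_0=\log\Phi$. By It\^o's formula, \eqref{eq:delay-noise} is then equivalent to
\[
\mathrm dY(t)=\Bigl[-\gamma(t)+r(t)e^{-Y(t)}f\bigl(e^{Y(t-\tau)}\bigr)-\tfrac12\sigma^2 b(Y_t)^2\Bigr]\mathrm dt+\sigma b(Y_t)\,\mathrm dW(t),
\]
with $b(\psi)=c(e^{\psi})$. This is \eqref{eq:stoch-t-mono-MG2} with the drift condition \eqref{eq:neg-drift}.

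\textbf{Step 1: existence, uniqueness and positivity.} The coefficients above are locally Lipschitz on $C[-\tau,0]$, so the local existence and uniqueness results for generic stochastic delay equations in \cite{artikel1-general} give a unique maximal solution up to an explosion time. Positivity of $X$ is automatic, since $X=e^Y$, so it remains to show that $Y$ neither explodes to $+\infty$ nor to $-\infty$ in finite time.
\begin{itemize}
\item \emph{No explosion to $+\infty$.} Since $0\le f\le M$ on $(0,\infty)$, the drift of $X$ is bounded above by $-\gamma_0X+r_\infty M$, where $\gamma_0=\inf_t\gamma(t)>0$ and $r_\infty=\sup_t r(t)<\infty$. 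A comparison or variation-of-constants argument applied to $e^{\gamma_0t}X(t)$ gives
\[
\mathbb E X(t\wedge\tau_n)\le \mathbb E\Phi(0)+r_\infty M/\gamma_0 .
\]
\item \emph{No explosion to $-\infty$.} Since $f\ge0$, the drift of $Y$ is bounded below by $-\gamma_\infty-\tfrac12\sigma^2\|c\|_\infty^2$, provided $\gamma$ is bounded (otherwise one uses local boundedness). Because $b$ is bounded, $Y(t)\ge Y(0)-Ct+\sigma\int_0^t b\,\mathrm dW$, and the martingale term is finite on compacts.
\end{itemize}
Together these give global persistence.

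\textbf{Step 2: boundedness in probability.} From $X(t)=e^{-\int_0^t\gamma}\Phi(0)+\int_0^t e^{-\int_s^t\gamma}r(s)f(X(s-\tau))\,\mathrm ds+\text{(stochastic integral)}$, or directly from the expectation bound in Step 1 (made uniform in $t$ by $\mathrm d(e^{\gamma_0t}X)\le e^{\gamma_0t}r_\infty M\,\mathrm dt+\mathrm d(\text{local martingale})$ and Fatou), we get $\sup_t\mathbb E X(t)<\infty$. Markov's inequality then gives boundedness of $X(t)$ in probability.

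For Krylov--Bogoliubov we need tightness of the segments $X_t$ in $C[-\tau,0]$, not just of the point values. We would control $\sup_{s\in[t-\tau,t]}X(s)$ by the same linear estimate together with a BDG bound on the stochastic integral over a window of length $\tau$ (boundedness of $c$ makes this work). Equicontinuity would come from Kolmogorov-type moment estimates of increments on that window. We take the tightness criterion for segment processes, and the Krylov--Bogoliubov theorem for the segment Markov process (Feller property on $C[-\tau,0]$), from \cite{artikel1-general}.

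\textbf{Step 3: the constant-coefficient case and the main obstacle.} When $\gamma,r$ are constant the segment process is time-homogeneous Markov. A non-trivial invariant measure comes from applying Krylov--Bogoliubov to $Y_t$, and this requires tightness of $Y_t$ in $C[-\tau,0]$, i.e.\ additionally bounding $Y$ from below. For the second bullet ($f(0)=0$) this is exactly the hypothesis: one solution is bounded away from zero in probability. We still have to upgrade that pointwise lower bound to a lower bound on $\inf_{s\in[t-\tau,t]}Y(s)$. This uses the reverse-time supremum estimate of \S\ref{Sec5}: on a window of length $\tau$ the drift of $Y$ is at least $-\gamma-\tfrac12\sigma^2\|c\|^2_\infty$, so $Y$ cannot drop far below its endpoint values except through a martingale fluctuation controlled by BDG.

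For the first bullet ($f(0)>0$) we must prove the lower bound ourselves. By continuity $f\ge\delta>0$ on $[0,\epsilon]$, and we combine this with the upper bound on $X$ in probability. Whenever $X(t-\tau)$ is in a compact set $[0,K]$ we have $f(X(t-\tau))\ge\min_{[0,K]}f>0$ (after arguing that $f>0$ near zero, or using $\inf_{[0,K]}f$ if positive). Then the drift term $re^{-Y}f\ge r\delta' e^{-Y}$ gives a strong restoring force for very negative $Y$. A Lyapunov function $V(y)=e^{-y}$, i.e.\ $1/X$, then yields $\sup_t\mathbb E[1/X(t)\wedge\dots]$ bounded, after a localisation on the event that the delayed value is at most $K$.

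I expect this lower-bound and segment-tightness step to be the main obstacle. The invariant measure $\nu$ on $C[-\tau,0]$ for $Y$ is pushed forward through $\psi\mapsto e^{\psi}$ and then through evaluation at $0$, giving $\mu$ with $\mu((0,\infty))=1$. When $f(0)=0$, $\delta_0$ is trivially stationary, which gives two distinct stationary distributions.
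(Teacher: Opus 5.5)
\textbf{There is a genuine gap: the lower bound in probability when $f(0)>0$ is not established.} The other parts of your plan are sound. Global existence and positivity are fine. The upper bound is essentially the paper's alternative Proposition~\ref{prop:alternatief}, but you must localise on $\{\Phi(0)<R'\}$, since $\mathbb E\Phi(0)$ need not be finite. The case $f(0)=0$ is handled correctly. Segment tightness is not an extra obstacle: pointwise boundedness in probability of one global solution already yields an invariant measure by Proposition~\ref{cor:application}.

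The lower bound for $f(0)>0$ is the actual content of the first bullet, and your Lyapunov sketch does not deliver it, for two reasons.
\begin{itemize}
\item The hypotheses give only $f\ge 0$ on $(0,\infty)$ and $f(0)>0$, so $\min_{[0,K]}f$ may vanish. A restoring force is guaranteed only when the \emph{delayed} value is small, not merely bounded.
\item Even if $f>0$ on $[0,K]$, It\^o's formula gives
\[
\mathrm d\bigl(e^{-Y(t)}\bigr)=\Bigl[\bigl(\gamma+\sigma^2c(X_t)^2\bigr)e^{-Y(t)}-r\,f\bigl(X(t-\tau)\bigr)e^{-2Y(t)}\Bigr]\mathrm dt-\sigma c(X_t)e^{-Y(t)}\,\mathrm dW(t).
\]
The quadratic restoring term acts only on the random, time-varying event $\{X(t-\tau)\le K\}$. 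Off that event the drift of $e^{-Y}$ is only bounded by $(\gamma+\sigma^2\|c\|_\infty^2)e^{-Y}$. Knowing that $\sup_t\mathbb P(X(t-\tau)>K)$ is small gives no drift inequality $\mathcal LV\le C-cV$. ``Localising on the event'' then gives no uniform bound on $\mathbb E[1/X(t)]$ or on any truncation of it.
\end{itemize}
What is missing is a mechanism linking small current values of $Y$ to small delayed values.

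\textbf{The missing idea is a pathwise excursion argument that uses the delay itself.} This is how the paper proceeds (Lemma~\ref{thm:q=0}, Corollary~\ref{cor:q=0}, Proposition~\ref{thm:boundbelow}).
\begin{itemize}
\item Fix $A>\tfrac12\sigma^2\|c\|_\infty^2$ and put $\gamma_A=\gamma+A$.
\item Take $R$ so large that $Y(0)>-R$ and that $re^{-y}f(e^{x})\ge\gamma_A$ whenever $x,y\le-R$. This is possible because $f\ge f(0)/2>0$ near $0$.
\item If $Y(t)\le-R$, let $a^t=\sup\{s\in[0,t]:Y(s)>-R\}$. On $[a^t,t]$ the current value stays $\le-R$, and after one delay period so does the delayed value.
\item Hence the modified drift $-\gamma_A+re^{-Y(\theta)}f(e^{Y(\theta-\tau)})$ is $\ge-\gamma_A$ during the first delay period and $\ge 0$ afterwards.
\item With $U(t)=-\int_0^t\bigl(A-\tfrac12\sigma^2c(X_s)^2\bigr)\mathrm ds-\int_0^t\sigma c(X_s)\,\mathrm dW(s)$, this gives the pathwise bound
\[
Y(t)\ge\min\Bigl\{-R,\;-R-\gamma_A\tau-\sup_{0\le s\le t}\bigl(U(t)-U(s)\bigr)\Bigr\}.
\]
\item $U$ has drift bounded above by $-(A-\tfrac12\sigma^2\|c\|_\infty^2)<0$ and diffusion coefficient bounded by $\sigma\|c\|_\infty$. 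Its reverse-time supremum is therefore bounded in probability uniformly in $t$ (Lemma~\ref{A12} and Proposition~\ref{prop:probabove}, via a Dambis--Dubins--Schwarz time change).
\end{itemize}
This reverse-time supremum estimate, not a Lyapunov function of the current state alone, is what closes the lower bound.
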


For the first part of Theorem \ref{main-thrm}, one may also consider $c$ that explicitly depends on time. When $
c$ is constant,  one derives equation \eqref{eq:delay-noise}  from \eqref{eq:delay-1} by  stochastically perturbing the mortality parameter in the latter according to the formal substitution
$\gamma(t)\mapsto \gamma(t)+\sigma \dot W(t)$. While this may be a useful practice for handling measurement errors, it  also   reflects the fact that noise is an intrinsic property of, e.g.,  physiological systems \cite{glass2001synchronization}. Additionally,  not always there is a clear distinction between noisy and chaotic behaviour; the source of the multiplicative noise may itself trace a chaotic process \cite{milton1989complex}.

  Theorem \ref{main-thrm} concludes our investigation with regard to the existence of a non-trivial invariant measure  for the stochastic monotone Mackey--Glass equation and complements the main findings in \cite{BosGaaVer1-24}, which studies the stochastic Wright's equation.  For the deterministic Mackey--Glass equation $(q=1)$, we can also infer the existence of a non-trivial invariant measure (i.e., a measure distinct from $\delta_0)$,    even in parameter regimes where the system seems to exhibit  chaos; see Corollary \ref{cor:1.2} and Figure \ref{fig:det:invariant} in {\S}\ref{sec:characterisation}. To the best of the authors' knowledge, these ergodic properties have been noted in, e.g., \cite{lasota1977ergodic,lossom2020density,mitkowski2012ergodic}, as well, but with no rigorous proofs so far. Numerics  in {\S}\ref{sec:characterisation} indicate that solutions to the stochastic Mackey--Glass equation $(q=1)$ are clearly bounded away from zero in probability, implying the existence of a non-trivial invariant measure as well; see also Figures \ref{fig:invariant} and \ref{fig:otherinitial}. The same conclusions can be drawn for the Nicholson's blowflies equation.

\paragraph{Scope and outlook} This paper leaves open many opportunities for further research.
    First of all, an interesting direction would be to investigate when, in the case of $f(0) = 0$, solutions are bounded away from zero in probability; thereby improving the main result in Theorem \ref{main-thrm}.    One way to possibly achieve this is by extending our novel approach in {\S}\ref{Sec5}--{\S}\ref{Sec6}. Another fruitful method could be to utilise the technique in \cite{appleby2003noise,appleby2004oscillation} (see \cite[Lem. 1]{appleby2003noise}, in particular), which studies stochastic oscillations, or perhaps use a combination of both approaches. 

Furthermore,   it is not true  in general that the non-trivial  invariant measures  in
Theorem \ref{main-thrm} are unique; however, uniqueness is expected to hold for the \textit{stochastic} Mackey–Glass equation $(q=1)$, the \textit{stochastic} Nicholson's blowflies equation, and related problems. Uniqueness, as well as  other properties such as, e.g., ergodicity, asymptotic stability/mixing, and absolute continuity with respect to the Lebesgue measure (thus admitting a probability density function) are far from immediate in the setting of SDDEs \cite{butkovsky2017invariant,article:reiss}, as opposed to SDEs and SPDEs \cite{book:asymptotic,book:daprato,da2014stochastic}. This is because, for instance,  the Feller property is not strong. The fact that these properties are not immediate for stochastic nonlinear delayed systems is also reflected in the recent work   \cite[Sec.~6]{mackey2023temporal}, which pursues a more direct Fokker–Planck type approach.






\paragraph{Organisation} This paper is structured as follows. In {\S}\ref{sec:characterisation} we examine, from both an analytical and a numerical perspective, the supports of 
$\mu$ and $\nu$ in Theorem \ref{main-thrm} and elucidate their connection to the system’s long-term dynamics. In {\S}\ref{sec:new:3} we give an overview of the steps leading to the existence of invariant measures. Specifically, we introduce a few definitions, describe the types of Lévy processes that we encounter, and provide general results on the global existence, persistence, and boundedness in probability of solutions. 
In {\S}\ref{Sec5} we derive   one-sided tail bounds|as   in \cite{BosGaaVer1-24}|for It\^o- and L\'evy-driven processes with negative drift, leading to a proof of   Theorem \ref{main-thrm} in {\S}\ref{Sec6} where the problem is essentially divided into a deterministic and stochastic part. 

\paragraph{Acknowledgements} The corresponding author would like to thank Alex Blumenthal and Ioannis (John) Stavroulakis for several helpful insights. Furthermore, the authors are grateful to the anonymous reviewers for their  comments.


\section{Characterisation of   invariant measures and numerics}\label{sec:characterisation}
The existence of $\mu$ and $\nu$ in Theorem \ref{main-thrm} follows from the Krylov--Bogoliubov procedure. Since this construction depends on the choice of initial condition $\Phi$, different initial data may in principle lead to different invariant measures  $\mu$ and $\nu$. Properties of  invariant measures obtained in this way are, generally speaking,  not known a priori. Theorem \ref{main-thrm} should for this reason be appreciated in conjunction with numerical simulations: it provides a means of verifying that the observed structures in, e.g., Figures \ref{fig:det:invariant}--\ref{fig:otherinitial} are not numerical artefacts. In fact, we   know from this theorem  in combination with these illustrations that even much more rich underlying structures exist. In practice,  the Krylov–Bogoliubov method typically constructs the physically “relevant” invariant  measures\footnote{In the literature, Eckmann and Ruelle refer to these as physical measures \cite{eckmann1985ergodic}, since they correspond physically to experimental time averages. Physical measures are essentially the invariant measures that describe the dynamics for a set of initial conditions with positive Lebesgue measure \cite{blumenthal_young_ergodic_theory}.} for negative feedback systems; we explain why this is the case in more detail below.

Time series shown in Figures~\ref{fig:det:invariant}–\ref{fig:otherinitial} are generated using a standard Euler–Maruyama scheme \cite{buckwar2008weak} applied to the stochastic delay equation \eqref{eq:stoch-t-mono-MG2}. Furthermore, the  invariant measures in Figures~\ref{fig:det:invariant}–\ref{fig:otherinitial} are obtained by performing the Krylov--Bogoliubov method numerically.  
 Before we describe our numerical methodology and outline the insights it provides, we give (to a limited extent) an analytical characterisation of the non‑trivial invariant measures arising in Theorem \ref{main-thrm}.  






In the deterministic case ($\sigma=0$), Theorem \ref{main-thrm} combined with \cite[Thm. 3]{article:MGanalysis}|telling us when solutions remain  bounded away from zero for equation \eqref{eq:MG}|yields  the following result. At this point, still not much is known about the support of $\mu$ and whether it is unique.

\begin{corollary}\label{cor:1.2}
    The solution to the deterministic Mackey--Glass equation $(q=1),$
    \begin{equation}
        x'(t)=-\gamma(t)x(t)+r(t)\frac{x(t-\tau)}{1+x(t-\tau )^p},\label{eq:MG}
    \end{equation}
    with initial value $ x_0=\ph$, is bounded away from zero, i.e., $\inf_{t\geq 0}x(t)>0$, provided that
    \begin{equation}
       p>1,\quad\inf_{t\in[-\tau,0]}\ph(t)>0,\quad  \liminf_{t\to 0}\frac{r(t)}{\gamma(t)}>1.
    \end{equation}
    Furthermore, if in addition $\gamma$ and $r$ are constants, then $r>\gamma$ and equation \eqref{eq:MG} admits a non-trivial invariant measure $\nu$ on $C[-\tau,0]$, and hence a stationary distribution $\mu$ with $\mu([m,M])=1$
 for some $m,M>0.$ \end{corollary}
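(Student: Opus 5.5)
The corollary splits into a deterministic persistence statement and an ergodic consequence, and I would treat them separately.

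\emph{Persistence.} The first assertion, $\inf_{t\ge0}x(t)>0$ under $p>1$, $\inf_{[-\tau,0]}\varphi>0$ and $\liminf r/\gamma>1$, is exactly the content of \cite[Thm.~3]{article:MGanalysis} for equation \eqref{eq:MG}. So the first step is to check that its hypotheses match ours. The nonlinearity $f(x)=x/(1+x^p)$ is locally Lipschitz, non-negative on $(0,\infty)$ and bounded above by $\max_x f(x)<\infty$ (here $p>1$ is needed). Moreover, $\inf\gamma>0$ and $\sup r<\infty$ are implicit in the setting of Theorem~\ref{main-thrm}. I would then quote the result.

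Should the citation need a short justification, I would sketch it as follows.
\begin{itemize}
\item \emph{Upper bound.} Since $f\le \bar f$, the variation of constants formula gives $x(t)\le M:=\max\{\|\varphi\|_\infty,\ \sup r\,\bar f/\inf\gamma\}$.
\item \emph{Lower bound.} Choose $m>0$ so small that $m<\inf\varphi$ and $r f(y)/\gamma > y$ for all $y\in(0,m]$. This is possible because $f(y)/y\to1$ as $y\to0$ and $r/\gamma\ge 1+\epsilon$ near the relevant times.
\item \emph{Barrier argument.} Suppose $t_0$ is the first time with $x(t_0)=m'$ for some $m'<\min\{m,f(m)\text{-level}\}$. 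Then $x'(t_0)\le0$. On the other hand, $x(t_0-\tau)\in[m',M]$ and $f$ is bounded below on $[m',M]$ by $\min\{f(m'),f(M)\}$. Choosing $m'$ below $\min\{m,\ r f(M)/\gamma\}$ forces $x'(t_0)>0$, a contradiction.
\end{itemize}

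\emph{Constant coefficients.} When $\gamma,r$ are constants, the liminf condition reads $r/\gamma>1$, i.e.\ $r>\gamma$. Take $\sigma=0$ in Theorem~\ref{main-thrm}; the deterministic equation is a special case and the initial value $\Phi=\varphi$ is deterministic and positive. Here $f(0)=0$, so the second bullet of Theorem~\ref{main-thrm} applies. Bounded away from zero in probability holds trivially, since $x(t)\ge m'$ for all $t$ and therefore $\mathbb P(x(t)<m')=0$. This gives an invariant measure $\nu$ on $C[-\tau,0]$ with push-forward $\mu$ satisfying $\mu((0,\infty))=1$.

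\emph{Support of $\mu$.} The sharper claim $\mu([m',M])=1$ I would obtain by inspecting the Krylov--Bogoliubov construction. The measure $\nu$ is a weak limit of the time averages
\[
\frac1T\int_0^T\delta_{x_t}\,\mathrm dt .
\]
Each of these is supported on the set $K=\{\psi\in C[-\tau,0]: m'\le\psi\le M\}$, which is closed. By the Portmanteau theorem, $\nu(K)\ge\limsup \frac1T\int_0^T\delta_{x_t}(K)\,\mathrm dt=1$. Pushing forward under $\psi\mapsto\psi(0)$ gives $\mu([m',M])=1$.

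\emph{Main obstacle.} The delicate point is this last step: one must verify that the invariant measure delivered by Theorem~\ref{main-thrm} really arises as such a weak limit along a subsequence of time averages from this specific initial condition. If instead it is produced in the log-variables $Y=\log x$, one needs the corresponding closed set $\{\log m'\le\psi\le\log M\}$, which is also closed, so the argument transfers. The non-triviality $\nu\ne\delta_0$ is then immediate from $\nu(K)=1$.
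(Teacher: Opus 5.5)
Your proposal is correct and follows essentially the paper's route: the paper obtains the corollary by combining the persistence result \cite[Thm.~3]{article:MGanalysis} with the second bullet of Theorem~\ref{main-thrm} for $\sigma=0$ and $f(0)=0$. Your Portmanteau argument on the closed set of segments with values in $[m',M]$ is exactly the implicit reason the Krylov--Bogoliubov limit of \S\ref{sec:characterisation} satisfies $\mu([m,M])=1$; only your optional barrier sketch is loose for time-dependent rates, where you need $r/\gamma\ge 1+\epsilon$ for all large $t$ (reading the $\liminf$ as $t\to\infty$) together with positivity on an initial interval and a restart from there.
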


 
Equation \eqref{eq:MG}  has a single steady state at $x=0$  when $r\leq \gamma$ and an additional steady  state at $x=x{}_*$, with $x{}_*^p=(r-\gamma)/\gamma$, when $r>\gamma$, which yields the non-trivial invariant measure $\delta_{x_*}$|no matter the stability of the fixed point $x_*$. From Figure \ref{fig:det:invariant}, for instance, we deduce that there are at least two distinct non-trivial invariant measures for $\tau=1$ and $p=8.$ In fact, we expect for these parameter values to have infinitely many distinct invariant measures, as we believe that there are infinitely many unstable periodic orbits. Naturally, the existence of steady states or limit cycles—either stable or unstable—implies the existence of invariant measures \cite{eckmann1985ergodic}. Typically, we retrieve the same histograms  in Figure \ref{fig:det:invariant} for most other initial conditions.

Let us now proceed  with what we can say analytically about the uniqueness and  support of $\mu$ in Corollary \ref{cor:1.2}.   
A direct consequence of   the results in \cite{berezansky2013mackey,wei2007bifurcation}, see \cite[Thm. 5.1]{wei2007bifurcation} in particular, is the following:
\begin{itemize}
    \item[(i)]   For $r>\gamma $ and $\gamma/r>(p-2)/p$, we have that $x_*$ is asymptotically stable for all $\tau \geq 0,$ and hence there only exists\footnote{Excluding convex combinations of the Dirac measures $\delta_0$ and $\delta_{x_*}.$} the non-trivial invariant measure $\mu=\delta_{x_*};$
    \item[(ii)]  For $r>\gamma$ and  $\gamma/r<(p-2)/p,$ we have that $x_*$ is asymptotically stable $($unstable$)$ when $\tau<\tau_0$ $(\tau>\tau_0)$  with 
    \begin{equation}
        \tau_0=\frac{r}{\gamma\sqrt{[p\gamma-(p-1)r]^2-r^2}}\arccos\left(\frac{r}{p\gamma-(p-1)r}\right).
    \end{equation}
    Hence, for $\tau<\tau_0,$ there is again only the non-trivial invariant measure $\delta_{x_*}$. For $\tau>\tau_0$,    a non-trivial invariant distribution $\mu$ distinct from $\delta_{x_*}$ can exist, and if so, then it satisfies $\mu([m,M])=1$ for some $m,M>0$ with $m\neq M.$
\end{itemize}
    
  \begin{figure}[!t]
    \centering
    \includegraphics[width=0.995\linewidth, trim = 1cm 0 0 0, clip]{p=8_det10000.pdf} 
    \caption{Simulation of the deterministic equation \eqref{eq:t-mono-MG2} on the time interval $[0,10000]$ with parameters $\gamma=5, r=10, \tau=1,  $ together with $f(x)=x(1+x^p)^{-1}$ where   $p=8$. Initial data is given by   $\psi(t)=0.5,t\in[-1,0].$ On the left, we plot the solution on the time interval $[-1,50]$. In the middle panel, we show a histogram of all values over the interval $[5000,10000]$, to avoid transient effects from the initial condition. This histogram approximates the stationary distribution of the system related to the strange attractor (whose existence has not been proved analytically). On the right, we display the phase portrait as a heat map, which consequently visualises the push-forward of an invariant measure $\nu$ on $C[-\tau,0]$ under the evaluation map $C[-\tau,0]\to\mathbb R, \varphi\mapsto (\varphi(-\tau),\varphi(0))$. Observe that changing the interval $[5000,10000]$ into $[\alpha,\beta]$ for any $\beta\gg\alpha\gg 0$  leads (approximately) to the same figures, indicating that the structures are indeed invariant.}
    \label{fig:det:invariant}
\end{figure}


Note that $\mu$ in (ii) cannot be another point measure, because if it were, then we should have had another fixed point in \eqref{eq:MG}. While for $\tau>\tau_0$ the fixed point   $x_*$ is unstable, we caution the reader that theoretically one could still construct the measure $\delta_{x_*}$ by means of the Krylov--Bogoliubov procedure, even if the initial value is unequal
to $x_*.$\footnote{Such phenomena already occur in ODEs. For example, consider   a continuous-time dynamical system in $\mathbb R^2$ with a ``figure eight'' separatrix in the phase plane, where  the intersection is a saddle node at the origin, with two unstable nodes inside each homoclinic orbit \cite[Fig. 17]{eckmann1985ergodic}; see \cite[eq. (4.46)]{meiss2007differential} for a concrete set of equations. For any initial condition unequal to the two unstable nodes, the limit of $\mu_T$|see \eqref{eq:muT}|converges to $\delta_0$, implying that all non-trivial solutions spend most of their time near the origin (as that is where the slow dynamics happen). An  example for which $(\mu_T)_{T\geq 0}$ admits convergent subsequences, but no unique limit, can be as simple as a Lotka--Volterra system with 3  species. This system can exhibit non-periodic population oscillations of bounded amplitude but ever increasing cycle time \cite{may1975nonlinear}.} 
 This however turns out not to be the case if we  perform the procedure  numerically; see, e.g., Figure \ref{fig:det:invariant}. Furthermore, in \cite{wei2007bifurcation} it is shown that there is a Hopf bifurcation at $x=x_*$ when $\tau=\tau_0$ and that the
    the system can possibly undergo  multiple Hopf bifurcations; this follows from the equation  \eqref{eq:characteristic:CH3} below but with $0$ replaced by $x_*$. 
      Analytically, the global existence of periodic solutions are known under (ii) with $\tau>\tau_0$ and the additional, sufficient but not necessary, condition 
 \begin{equation}
     \max\{1,(p-1)^2/4p\}<\sqrt{2}\gamma/r;\label{eq:cond:max}
 \end{equation}
 see \cite[Thm. 5.2]{wei2007bifurcation}.
 We point out that   condition \eqref{eq:cond:max} is not satisfied in Figures \ref{fig:det:invariant}--\ref{fig:otherinitial}. Similar results can be found in \cite{wei2007bifurcation}   for the Nicholson's blowflies equation and for a general class of functionals $f$.


Multiplicative noise as in \eqref{eq:delay-noise} typically ensures that  $x=0$ remains a steady state, implying that  $\delta_0$ is an invariant measure even in the setting with noise. 
For $r>\gamma>0$ and  sufficiently small $\sigma^2>0$, we believe  an invariant measure $\mu$ exists   which is concentrated near the value $x_\ast$ for the Mackey--Glass functional \eqref{eq:Mackey-Glass} with $q=1$, or at least  $\mu(\{0\}) = 0$  and thus $\mu\neq \delta_0;$ see also Figure \ref{fig:invariant}.    This conjecture can  be extended to the  general setting   where $f(0)=0$ and  $f'(0)>0.$ The characteristic equation associated with $x=0$ reads  
\begin{equation}
    \lambda + \gamma=rf'(0)e^{-\lambda \tau},\quad \lambda \in\mathbb C,\label{eq:characteristic:CH3}
\end{equation}
where explicit solutions are given by $\lambda_m = -\tau^{-1}W_m(-\tau\theta),$ with $\theta = rf'(0)-\gamma$ and  $W_m$, $m\in\mathbb Z$, denoting the $m$-th branch of the Lambert function.
The leading eigenvalue is $\lambda_0$, see \cite{corless1996lambert}, and the sign of the real part of $\lambda_0$ coincides with the sign of $\theta.$ Therefore, the steady state $x=0$ is (locally) asymptotically stable if $rf'(0)<\gamma$ and  unstable if $rf'(0)>\gamma$ holds \cite[Thm. 6.8]{diekmann2012delay}. Without noise and  for sufficiently small $\sigma^2>0$,  we believe that the condition $rf'(0)>\gamma$ is  a good indication for the existence of a non-trivial invariant measure  in  models of the form \eqref{eq:delay-noise}. Similarly, conclusions such as (i) and (ii) above can be extended to the noisy setting.
The discussion above seems to suggest that the probability of extinction under  sufficiently small noise is zero when $x=0$ is unstable. Take note that this probability can become strictly positive if one replaces the multiplicative noise term in \eqref{eq:delay-noise} by a non-locally Lipschitz coefficient \cite{Busse}.

 \begin{figure}[!t]
    \centering
    \includegraphics[width=0.995\linewidth]{p=4_500.pdf}\\[.25cm]
    \includegraphics[width=0.995\linewidth]{p=6_500.pdf}\\[.25cm]
    \includegraphics[width=0.995\linewidth]{p=8_500.pdf}
    \caption{Simulation of 100 sample paths of the solution to equation \eqref{eq:stoch-t-mono-MG2} on the time interval $[0,500]$ with parameters $\gamma=5, r=10, \tau=1, b=0.01, a=-b^2/2,$ together with $f(x)=x(1+x^p)^{-1}$ where $p=4$ (first row), $p=6$ (second row), and $p=8$ (third row). Initial data is given by $Y_0=\psi,$ with $\psi(t)=0.5,t\in[-1,0].$ On the left, we plot the 100 realisations on the time interval $[-1,50]$. In the middle panel, we show a histogram of all values over the interval $[250,500]$, to avoid transient effects from the initial condition. On the right, we provide a heatmap of the phase portrait of all sample paths on the interval $[250,500]$. Observe that the histogram and phase portrait for $p=8$ is similar to that of the deterministic case; see Figure \ref{fig:det:invariant}.}
    \label{fig:invariant}
\end{figure}
 \begin{figure}[!t]
    \centering
    \includegraphics[width=0.995\linewidth]{p=6_messy.pdf} 
    \caption{The same as in Figure \ref{fig:invariant} for $p=6$, but now with $\psi(t)=0, t\in[-1,0].$ This figure both motivates the uniqueness of the invariant measure in the stochastic setting and illustrates that, although time series data can sometimes appear quite irregular,   visualisations of invariant measures  clearly reveal  the underlying statistical structure.}
    \label{fig:otherinitial}
\end{figure}


Ultimately, 
let us  discuss the obtained illustrations of the invariant measures in Figures \ref{fig:det:invariant}--\ref{fig:otherinitial} and their implications in more detail.  To this end, write $Y(t)$ for a solution to the transformed equation \eqref{eq:stoch-t-mono-MG2},  with $b=\sigma\in\mathbb R$ and $a=-\frac12b^2$. For $0\leq \alpha \ll T<\infty $, define
\begin{equation}
\mu_T(A) := \frac{1}{T} \int_\alpha^{\alpha+T} \mathbb{P}(Y(t) \in A)\, \mathrm  dt,\qquad \nu_T(B) := \frac{1}{T} \int_\alpha^{\alpha+T} \mathbb{P}(Y_t \in B)\, \mathrm  dt,\label{eq:muT}
\end{equation}
for measurable sets $A\subset \mathbb R$ and $B\subset C[-\tau,0].$ Here,  $\mu_T$ and $\nu_T$  are probability measures on $\mathbb R$ and $C[-\tau,0]$, respectively. 
  The Krylov--Bogoliubov theorem \cite[App. C]{artikel1-general}  allows us to conclude that $(\nu_T)_{T\geq 0}$ has a convergent subsequence, whose limit we denote by $\nu$. Hence, the invariant measures we construct describe the system's dynamics over a long-time horizon. In many applications,  the limit   $  \nu = \lim_{T\to \infty} \nu_{T}$ simply exists, which is then an  invariant measure.  Consequently, the existence of this (subsequential) limit $\nu$   implies the existence of a stationary distribution $  \mu = \lim_{T \to \infty} \mu_T$ satisfying $\mu(A)=\nu(\{\varphi\in C[-\tau,0]:\varphi(0)\in A\})$.    

 In view of the above,  one obtains   a numerical approximation of a stationary distribution $\mu$ using discrete sums:
\begin{equation}
\mu(A) \approx \frac{1}{(M+1)N} \sum_{k=0}^M 
\sum_{i=1}^N \mathbf{1}_A(y_i(t_k)),\qquad \mathbf 1_A(y)=\begin{cases}
    1&y\in A,\\ 0&y\not\in A,
\end{cases}\label{CH3:eq:i:histogram}
\end{equation}
where $  y_i(t_k)$  denotes the $i$-th trajectory at time $t_k=\alpha+k\Delta t$, on a uniform grid of length $T=M\Delta t$ with step size $\Delta t$, and where $N$ denotes the number of trajectories. This discrete formulation directly corresponds to creating a histogram as in Figures \ref{fig:det:invariant}--\ref{fig:otherinitial}. To ensure that \eqref{CH3:eq:i:histogram} provides a good approximation of the stationary distribution, $N$ and $M$ should initially be chosen sufficiently large, and the resulting histogram should remain stable under variations in $N$ and $M$. By doing so, we tacitly exploit the fact that the limit $\nu=\lim _{T\to\infty} \nu_T$ exists (and thus not only as a subsequential limit). In numerous situations, a single trajectory ($N = 1$) suffices to approximate the invariant measure, indicating  the  measure is ergodic. Indeed, the same one- and two-dimensional histograms in Figures \ref{fig:invariant} and \ref{fig:otherinitial} are obtained when  a single trajectory is considered, which suggests the underlying invariant measures are ergodic.
The phase portraits within these figures are a numerical approximation of the push-forward measure $\mu'$ of $\nu$ under the evaluation map $\varphi\to(\varphi(-\tau),\varphi(0)),$ i.e., $\mu'(A')=\nu(\{\varphi\in C[-\tau,0]:(\varphi(-\tau),\varphi(0))\in A'\})$ for   $A'\subset \mathbb R^2.$ Here, $\mu$ and $\mu'$ are finite-dimensional projections of the infinite-dimensional object $\nu.$


In the presence of noise, trajectories are continuously pushed away from unstable orbits, preventing them from lingering near such structures. This regularising effect strongly suggests the uniqueness of a non-trivial invariant measure for the stochastic Mackey--Glass equation $(q=1)$, even in chaotic regimes where the system already exhibits substantial mixing. In the deterministic setting \eqref{eq:MG}, there can generally be more non-trivial invariant measures than just $\delta_{x_*}$. In fact, there are  parameter value regions for which we have co-existing stable limit cycles \cite{losson1993solution,tarigo2022characterizing}, which then leads to multiple invariant measures (and hence infinitely many, by taking convex combinations) associated with attractors.   The addition of noise is expected to eliminate unstable invariant structures and to induce metastable switching between basins of attraction. One therefore expects the stochastic counterpart \eqref{eq:delay-noise} with $f(x)=x(1+x^p)^{-1}$ to possess a unique non-trivial invariant measure.

 As mentioned previously,  “unstable’’ invariant measures can be constructed through the Krylov–\linebreak Bogoliubov method.   On the other hand, these are rarely observed numerically, as one typically detects the presence of a “stable” invariant measure  more easily. To illustrate this, note that the Dirac measure $\delta_{x_*}$ can be retrieved numerically through \eqref{CH3:eq:i:histogram} by taking the solution with $\psi=\log(x_*)$ as initial condition. However, for $p=6$, for instance, this equilibrium  $x_*$ loses stability to a periodic solution, hence   the system admits a non-trivial invariant measure supported on that periodic orbit. This invariant measure  can be numerically obtained through \eqref{CH3:eq:i:histogram} again, now by taking any $\psi\neq \log(x_*)$ as initial condition. In a similar fashion, we find in Figure \ref{fig:det:invariant} (with $p=8)$ some  invariant measure, which is also obtained for most other initial data. This suggests that it is   connected to the strange attractor (whose existence has not been proved analytically).

\section{Towards the existence of invariant measures}\label{sec:new:3}
We obtain  the existence of a non-trivial invariant measure by applying a procedure thanks to  Krylov and Bogoliubov \cite{kryloff1937theorie} on equation \eqref{eq:MGeq4}; see \cite[App C.]{artikel1-general} for the general existence theorem(s) in the delay setting.  Note that applying the Krylov--Bogoliubov method   directly on the original equation \eqref{eq:delay-noise}  also results into the existence of an invariant measure; even in the case  $f(0)=0.$ However, one does  not exclude the possibility now of having found the  Dirac measure $\delta_0$. 

Most of the necessary technical groundwork is already carried out in \cite{artikel1-general}, allowing us to avoid a lot  of the abstract machinery in this paper. In particular, we have proved  the proposition  below, which  tells us that if a solution  is bounded in probability, then we can construct from this solution an invariant measure $\nu$, hence a stationary distribution $\mu,$ as discussed in {\S}\ref{sec:characterisation}. Technically speaking, this is because we have proved  the  boundedness in probability of $(X(t))_{t\geq 0}$   implies that the segment process $\big(\|X_t\|_\infty\big)_{t\geq 0}$ is bounded in probability and is  tight; see \cite{artikel1-general} for more information.

\begin{proposition}[Corollary 4.11 of \cite{artikel1-general}]\label{cor:application}Suppose that $f:\mathbb R\to \mathbb R$ is  locally Lipschitz continuous, positive on $(0,\infty)$, and bounded from above.  In addition, assume    $\gamma,r>0$.  	Consider the    stochastic delay differential equation
 	\begin{equation}
 		\mathrm d Y(t)=\left[-\gamma+re^{-Y(t)}f\big(e^{Y(t-1)}\big)\right] \mathrm d t+a\left(Y_{t}\right) \mathrm d t+b\left(Y_{t-}\right) \mathrm d L(t),\label{eq:MGeq4}
 	\end{equation}
 	where $a,b$ are  time-independent and proper locally Lipschitz. Further, assume $L=(L(t))_{t\geq 0} $ is an integrable Lévy process.  	Let  $\alpha,\beta\geq 0$ be  non-negative constants   such that
 	\begin{equation}
 		|a(\varphi)| \leq \alpha \quad \text { and } \quad b(\varphi)^{2} \leq \beta^{2}, \quad \text { for all } \varphi \in D[-1,0].
 	\end{equation}
Finally, assume that for every initial process  $\Phi$
the corresponding solution exists globally. If one of these   solutions   is bounded   in probability,
  then there exists an invariant measure and hence a stationary solution. 
\end{proposition}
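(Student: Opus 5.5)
The plan is the Krylov--Bogoliubov argument on the segment space $D[-1,0]$, using the Markov--Feller structure of the segment process. First, I will check that the solution map $\Phi\mapsto Y_t^\Phi$ of \eqref{eq:MGeq4} defines a time-homogeneous Markov process on $D[-1,0]$. This uses that $a$, $b$ and the drift are time-independent, $L$ has stationary independent increments, and pathwise uniqueness holds under the proper local Lipschitz assumptions. Next, I will show that the transition semigroup $P_t\phi(\varphi)=\mathbb E\,\phi(Y_t^\varphi)$ is Feller, i.e.\ maps bounded continuous functions to bounded continuous functions. This follows from continuous dependence on initial data: for initial data in a bounded set, stop at exit times of large balls, where the coefficients are globally Lipschitz. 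A Gronwall estimate then gives $\mathbb E\sup_{s\le t}|Y^\varphi(s\wedge\sigma_R)-Y^\psi(s\wedge\sigma_R)|^2\le C_{R,t}\|\varphi-\psi\|_\infty^2$. Global existence is assumed, so $\sigma_R\to\infty$ and we get convergence in probability.

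The core step is tightness of the averaged laws $\nu_T$ from \eqref{eq:muT}. Boundedness in probability of one solution $Y(t)$ must be upgraded to tightness of the laws of the segments $Y_t$ in $D[-1,0]$. I would proceed in two parts. \emph{First}, bound the segment supremum. On $[t-1,t]$ the drift is bounded above by $re^{-Y}\sup f+\alpha$, and below by $-\gamma-\alpha$ (using $f\ge0$ on $(0,\infty)$). Writing $Y(s)=Y(t-1)+\int_{t-1}^s(\cdots)\,\mathrm du+\int_{t-1}^s b\,\mathrm dL$, the martingale part is controlled uniformly in $t$ by Doob's or the BDG inequality, using $b^2\le\beta^2$ and $L$ integrable (splitting off its mean). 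This gives a uniform-in-$t$ tail bound for $\sup_{s\in[t-1,t]}Y(s)$ from above. For the lower side, the term $re^{-Y}f$ is nonnegative, so $Y(s)\ge Y(t-1)-(\gamma+\alpha)-\sup|\int b\,\mathrm dL|$, giving a uniform lower tail bound. Hence $\|Y_t\|_\infty$ is bounded in probability. \emph{Second}, control the modulus of continuity. On the event $\{\|Y_t\|_\infty\le R\}$ the drift is bounded by a constant $K_R$, so drift oscillation over intervals of length $\delta$ is at most $K_R\delta$. The jump part is handled through Aldous' criterion, or the Skorokhod modulus $w'$, using the uniform-in-$t$ moment estimates of the stochastic integral over short intervals. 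Combining both parts with Arzelà--Ascoli (or the Skorokhod analogue), for every $\varepsilon$ there is a compact $\mathcal K$ with $\mathbb P(Y_t\in\mathcal K)\ge1-\varepsilon$ for all $t\ge1$. Averaging, $\nu_T(\mathcal K)\ge1-\varepsilon$.

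Finally, by Prokhorov, some sequence $\nu_{T_n}$ converges weakly to a probability measure $\nu$. For bounded continuous $\phi$, the standard identity $\int P_s\phi\,\mathrm d\nu_{T}-\int\phi\,\mathrm d\nu_T=O(s/T)$ together with the Feller property lets us pass to the limit, giving $\nu P_s=\nu$. Starting the equation from a $\mathcal F_0$-measurable $\Phi\sim\nu$ independent of $L$ then yields a stationary solution. I expect the tightness step in $D[-1,0]$ to be the main obstacle. This means obtaining uniform-in-$t$ control of the segment supremum and of the Skorokhod modulus from mere boundedness in probability of $Y(t)$, especially handling the unbounded factor $e^{-Y}$ from above, which requires first securing the lower bound.
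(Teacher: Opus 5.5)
Your overall architecture is the one the paper relies on: Krylov--Bogoliubov on the segment space, with the decisive input that boundedness in probability of $Y(t)$ upgrades to boundedness in probability and tightness of the segments $Y_t$. Your order of estimates is a sound way to do that upgrade: first bound $\inf_{[t-1,t]}Y$ from below using $re^{-Y}f\ge 0$, then bound $\sup_{[t-1,t]}Y$ once $e^{-Y}$ is under control. The gap is in the Feller step, which you prove in the wrong topology.

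When $L$ jumps, the laws of $Y_t$ are in general not tight for $\|\cdot\|_\infty$ on $D[-1,0]$. A $\|\cdot\|_\infty$-compact set allows, for each $\eta>0$, only finitely many locations for jumps larger than $\eta$, whereas a Lévy process has no fixed jump times. So tightness, Prokhorov and the limit $\nu_{T_n}\to\nu$ all live in the Skorokhod topology, as you yourself indicate. Passing to the limit in $\int P_s\phi\,\mathrm d\nu_{T_n}$ then requires $P_s\phi$ to be Skorokhod-continuous, at least $\nu$-a.e. Your Gronwall bound in terms of $\|\varphi-\psi\|_\infty$ gives only continuity for the finer uniform topology, which is a strictly weaker property. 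For example, $\psi\mapsto\min\{1,|\psi(-\tfrac12)-\psi(-\tfrac12-)|\}$ is $\|\cdot\|_\infty$-Lipschitz but discontinuous along $\mathbf 1_{[-1/2+1/n,0]}\to\mathbf 1_{[-1/2,0]}$ in the Skorokhod topology.

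What is missing is continuous dependence of $Y^{\varphi}_s$ on $\varphi$ along Skorokhod-convergent sequences $\varphi_n\to\varphi$. It suffices to treat $s\ge 1$: once $\nu P_s=\nu$ for all $s\ge 1$, you get $\nu P_s=(\nu P_1)P_s=\nu P_{1+s}=\nu$ for every $s$. This is where the delay bites. Skorokhod-close initial segments are not uniformly close, and $a,b$ are only Lipschitz for $\|\cdot\|_\infty$. Hence for $u\in[0,1]$ your estimate says nothing about the distance between $a(Y^{\varphi_n}_u)$, $b(Y^{\varphi_n}_{u-})$ and $a(Y^{\varphi}_u)$, $b(Y^{\varphi}_{u-})$. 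Before any Gronwall argument can start, you need an additional argument. For instance, show that this discrepancy is small outside a set of times of small Lebesgue measure, and that the jump times of $L$ almost surely avoid that set. Without it, your proof goes through as written only when $L$ has no jumps, so that everything lives in $C[-1,0]$ with the sup norm.

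A smaller point: the $L^2$ Gronwall estimate needs $\mathbb E[L](t)<\infty$, but only $\mathbb E|L(1)|<\infty$ is assumed. You must first split off the finitely many large jumps on compact intervals and argue between them. For the same reason, your uniform-in-$t$ segment bounds should use the $p=1$ (Davis) form of the BDG inequality after removing the mean of $L$.
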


In {\S}\ref{sec:typesnoise} we clarify the notions of stationary distributions and invariant measures, give a definition of boundedness in probability, and also list the types of Lévy noise encountered in this paper. In {\S}\ref{sec:glb} we consider conditions under which solutions to \eqref{eq:delay-noise} remain positive and exist globally, and conditions for global existence of solutions to \eqref{eq:MGeq4}, followed by a discussion of their boundedness in probability in {\S}\ref{sec:bndness}.



\subsection{List of definitions and  types of L\'evy noise}\label{sec:typesnoise}
Consider the generic autonomous initial value problem of the form 
  \begin{equation}\label{eq:SDDE-main}\left\{
	\begin{array}{rlll}
		\mathrm d X(t)&=&a(X_t)\,\mathrm d t+b(X_{t-}) \,\mathrm d L(t), &\quad \text { for } t\geq 0, \\[.05cm]
		X(u)&=&\Phi(u),& \quad \text { for } u \in[-\tau, 0],
	\end{array}\right.
\end{equation}
where we assume $\Phi\in\mathbb D[-\tau,0]$, i.e.,  a stochastic process taking values in $D[-\tau,0],$  $L=(L(t))_{t\geq 0}$ is a L\'evy process, and  $a,b:D[-\tau,0]\to\mathbb R$ are locally Lipschitz with  respect to the supremum norm $\|\cdot\|_\infty$ and are, in addition, assumed  \textit{proper} for well-posedness; see \cite[Sec. 2]{artikel1-general} for more details on the existence and uniqueness of solutions to \eqref{eq:SDDE-main}.


\begin{definition}\label{def:invariant}
	A solution $X=(X(t))_{-\tau\leq t<\infty}$ to  problem \eqref{eq:SDDE-main} with maximal existence time, i.e., $T_\infty=\infty$ $\mathbb P$-a.s., is called \texttt{stationary} if the probability distribution of $X(t)$ coincides, for all $t \geq 0$, with the probability distribution of $\Phi(0)$. In that  case, the probability distribution of $\Phi(0)$ is  said to be a \texttt{stationary distribution} of the  delay equation in \eqref{eq:SDDE-main}.
\end{definition}


\begin{definition}\label{def:invariantm}
A Borel probability measure $\nu$ on $E[-\tau,0]$\footnote{We allow $E\in\{C,D\}.$ The space $C[-\tau,0]$ is to be endowed with the uniform topology, as usual, but the space $D[-\tau,0]$ must be endowed with the {Skorokhod} topology; see \cite{artikel1-general} for more information. For the intents and purposes of this paper, we do not go further into these technicalities. } is an \texttt{invariant} \texttt{measure} of the delay  equation in \eqref{eq:SDDE-main} if the segment process $(X_t)_{t \geq 0}$, with initial condition $X_0 = \Phi$ distributed according to $\nu$, has the same distribution $\nu$ at every time $t \geq 0$. The push-forward measure of $\nu$ under the evaluation map
	\begin{equation}
 E[-\tau,0] \rightarrow \mathbb{R}:	\varphi \mapsto \varphi(0)
	\end{equation}
	is then a stationary distribution.
\end{definition}
Note that a stationary distribution is an invariant measure on $\mathbb{R}$. Also, 
an invariant measure (on $E[-\tau,0]$) contains richer information on the dynamical system than a stationary distribution does. While the terms ``stationary distribution'' and ``invariant measure'' are often used interchangeably throughout the literature, we  distinguish these notions in this paper for clarity.


\begin{definition}Let $I$ be some index set, e.g., take $I=\mathbb N$ or $I=\Rplus$. A family
$(Z_{\eta})_{\eta\in I}$ of real-valued random variables is    \texttt{bounded\,\,above {\normalsize\textnormal{(resp.,}} below{\normalsize\textnormal)} in\,\,probability}  if for every $\varepsilon>0$ there exists a real number $M_{\varepsilon} \in \mathbb{R}$ such that for all $\eta \in I$ we have 
 \begin{equation}
 \mathbb{P}\left(Z_{\eta} >  M_{\varepsilon}\right) < \varepsilon \quad\quad\quad\quad\quad\quad\quad\big(\text {resp., }\mathbb{P}\left(Z_{\eta} <  M_{\varepsilon}\right) <  \varepsilon \big).\quad
\end{equation}
 Differently put, we have $(Z_{\eta})_{\eta\in I}$ is bounded above (resp., below) in probability if and only if
 \begin{equation}
 	\lim_{R\to\infty}\sup_{\eta\in I}  \mathbb{P}\left(Z_{\eta} \geq  R\right)=0\quad\quad\quad\left(\text {resp., }	\lim_{R\to\infty}\sup_{\eta\in I}  \mathbb{P}\left(Z_{\eta} \leq -R\right)=0 \right).
 \end{equation}
A real-valued family $(Z_{\eta})_{\eta\in I}$ is said to be \texttt{bounded\,\,in\,\,probability} if it is both bounded above and below in probability. That is, for every $\varepsilon>0$ there exists a real number $M_{\varepsilon} \in \mathbb{R}$ such that for all $\eta \in I$ we have 
$
\mathbb{P}\left(|Z_{\eta}| >  M_{\varepsilon}\right) < \varepsilon,
$
or, in short,
$
\lim_{R\to\infty}\sup_{\eta\in I}  \mathbb{P}\left(|Z_{\eta}| \geq  R\right)=0.
$
\end{definition}
Note that  being bounded in probability is weaker than being   $\mathbb P$-a.s.\ bounded. For a more detailed discussion on these definitions above, we refer to \cite{artikel1-general}.


 








\paragraph{Types of Lévy noise} Throughout this paper, we consider SDDEs driven by what call  ``regulated Lévy processes''. To this end, we first introduce the following class of Lévy processes and introduce some notation that is also used in {\S}\ref{Sec5}--{\S}\ref{Sec6}. 

\begin{itemize}\item[] \begin{itemize}
 \item[(HJudi)] The process $L=(L(t))_{t\geq 0}$ is a square integrable L\'evy process that is of finite intensity, i.e., $\nu(\mathbb R)<\infty$, where $\nu$ is the associated L\'evy measure.
\end{itemize}\end{itemize}  

The finite intensity property above implies that  $L$ has a finite number of jumps on  any compact time interval \cite[Thm. 21.3]{book:sato}. Either \cite[Thm. 2.3.9]{book:applebaum} or \cite[Lem. 2.8]{book:kyp} subsequently tells us that  any $L$ of type (HJudi)  is a \texttt{jump\,\,diffusion\,\,process}. That is, a sum of two independent processes: a Brownian motion $W=(W(t))_{t\geq 0}$ which is scaled with the dispersion coefficient $\sigma^2$ and includes a drift; and a compound Poisson process $Z=(Z(t))_{t\geq 0}$   with jump measure $\frac1{\nu(\mathbb R)}\nu.$ 
Indeed, we have
\begin{equation}
    L(t)=\left(\gamma - \int_{\{|x|\leq 1\}}x\nu(\mathrm dx)\right)t+\sigma W(t)+Z(t),\quad Z(t)=\sum_{k=1}^{N(t)}Z_k,\quad t\geq 0.\label{eq:not_canon}
\end{equation}
We say  $N=(N(t))_{t\geq 0}$  is a Poisson process associated to the L\'evy process $L$.       
Rewriting yields
\begin{equation}
    L(t)=\left(\gamma + \int_{|x|>1}x\nu(\mathrm dx)\right)t+\sigma W(t)+\big[Z(t)-\lambda_N \mathbb EZ_1t\big],\quad t\geq 0.
\end{equation}
In particular, we have $\lambda_N \mathbb EZ_1=\int_{\mathbb R}x\nu(\mathrm dx).$ A L\'evy process of finite intensity is a martingale if and only if $\gamma = -\int_{\{|x|>1\}}x\nu(\mathrm dx).$ Suppose  $Z_1$ is centred, i.e.,  $\mathbb EZ_1=0,$ then we have  $\gamma = \int_{\{|x|\leq 1\}}x\nu(\mathrm dx)$ and  $Z$ is a compound Poisson process.

Note that  $\mathbb EZ_1^2<\infty$ holds if and only if $L$ is of class (HJudi), and then the quadratic variation  and the predictable quadratic variation (whether $L$ is a martingale or not) equal
\begin{equation}
    [L](t)=\sigma^2t+\sum_{s\leq t}(\Delta Z(t))^2=\sigma^2t+\sum_{k=1}^{N(t)}Z_k^2\quad\text{and}\quad\langle L\rangle(t)=\lambda t,\quad t\geq 0,
\end{equation}
respectively, where $\lambda =\sigma^2+\lambda_N\mathbb EZ_1^2$ and  $\lambda_N=\mathbb E [N(1)]$  is the intensity of the Poisson process $N$.


\begin{itemize}\item[] \begin{itemize}
 \item[(HReg)] The process $L=(L(t))_{t\geq 0}$ is  a L\'evy process of class (HJudi)  and   satisfies the following two additional properties:
  \begin{itemize}\item[] \begin{itemize}
     \item[(P1)] the process experiences no continuous drift, i.e., $\gamma = \int_{\{|x|\leq 1\}}x\nu(\mathrm dx);$
      \item[(P2)] jumps are $\mathbb P$-a.s.\ uniformly bounded by some $\zeta\geq 0,$ i.e., we have $|\Delta L(t)|\leq \zeta $  $\mathbb P$-a.s., where $\Delta L(t)=L(t-)-L(t)$ and $L(t-)=\lim_{s\nearrow t}L(s).$
 \end{itemize}\end{itemize}
 Such processes are referred to as \texttt{regulated\,\,L\'evy\,\,processes} in this paper.
 \end{itemize}\end{itemize}
  \begin{itemize}\item[] \begin{itemize}
 \item[(HRegM)] The process $L=(L(t))_{t\geq 0}$ is a regulated L\'evy process and a martingale,
 hence called a \texttt{regulated\,\,L\'evy\,\,martingale}.
\end{itemize}\end{itemize} 
Property (P2) implies that $|Z_1|\leq \zeta$ holds $\mathbb P$-a.s., which enables us to keep jumps under control in a   pathwise manner; this is an essential property for our estimates in   {\S}\ref{Sec5}--{\S}\ref{Sec6}.  Furthermore, observe that 
 $\lambda=\sigma^2+\lambda_N\mathbb EZ_1^2\leq \sigma^2+\lambda_N\zeta^2$. Property (P1) is not really important, since it will only keep certain expressions simple.  
 Finally, note that (HRegM) implies $\mathbb EZ_1=0$ or $\lambda_N=0$. The latter  means that $L(t)=\sigma W(t), $ $t\geq 0,$ is a scalar multiple of a Brownian motion.

\subsection{Global existence and  the permanence property}\label{sec:glb} Global existence and the permanence property of solutions to the stochastic Wright's equation is relatively straightforward \cite[Prop. 1.1]{BosGaaVer1-24},
 yet it is unclear how to apply the same proof strategy to delay equations with stochastic negative feedback.
Note that the permanence property of \eqref{eq:delay-noise}  can be inferred by considering the transformed equation \eqref{eq:stoch-t-mono-MG2} and  invoking \cite[Prop. 3.4]{artikel1-general}. 

When $f$ is given by, e.g., \eqref{eq:Mackey-Glass}, global existence of solutions follows from the fact that $f$ is globally Lipschitz on $\mathbb R$|upon smooth extension. For $f$ as in Theorem \ref{main-thrm}, it can either be deduced from the fact that for $X\geq 0$ all the terms in  \eqref{eq:delay-noise} are of linear growth, see \cite[Sec. 2]{artikel1-general},  but it  also follows from the novel approach in {\S}\ref{Sec5}--{\S}\ref{Sec6}. These conclusions  hold for certain types of L\'evy noise  as well.

\subsection{Boundedness in probability}\label{sec:bndness}
 
 The approach in {\S}\ref{Sec5}--{\S}\ref{Sec6}, which involves stochastic estimates applied to pathwise bounds obtained from tracking  trajectories, has successfully shown that most solutions are bounded away from zero in probability  when 
$f(0)>0$. 
Recall that the same method has also  been effective for the stochastic Wright's equation \cite{BosGaaVer1-24}. In  case  $f(0) = 0$, if one could prove that there exists a solution to \eqref{eq:stoch-t-mono-MG2} which is bounded below in probability|as we were able to show when $f(0) > 0$|then the corresponding solution to \eqref{eq:delay-noise} would be bounded away from zero in probability.  There is clear numerical evidence that solutions do not go extinct, i.e., solutions are bounded away from zero $\mathbb P$-a.s., in the stochastic Mackey--Glass equations  with $r>\gamma$ and $\sigma>0$ in \eqref{eq:delay-noise} sufficiently small, provided that the initial condition is positive of course. This particularly allows us to infer from  Theorem \ref{main-thrm} the existence of a non-trivial invariant measure.
 Although numerically it seems evident that solutions do not go extinct for certain parameter regimes  when  $f(0)=0$, rigorously proving that then there is at least one solution that is bounded away in probability  remains an open problem. 
 
It is worth pointing out that the bound from above in probability can actually be shown without much difficulty. We can either do this directly, as demonstrated in the proof of Proposition  \ref{prop:alternatief}, or by means of the approach in {\S}\ref{Sec5}--{\S}\ref{Sec6}. The proof below is inspired by \cite{wang2019stochastic}; estimate  \eqref{eq:ultimate} shows that the solution is ultimately bounded in mean.

\begin{proposition}\label{prop:alternatief}
Suppose  $f:\mathbb R\to \mathbb R$ is  locally Lipschitz continuous, non-negative on $(0,\infty)$, and bounded from above. Assume  $\tilde \gamma=\liminf_{t\to \infty}\gamma(t)>0 $, $\tilde r=\sup_{t\geq 0}r(t)<\infty$, and  $g:C[-\tau,0]\to \mathbb R$  locally Lipschitz  with respect to the supremum norm $\|\cdot\|_\infty.$ If the solution $(X(t))_{-\tau\leq t<\infty }$ to 
\begin{equation}
    \mathrm dX(t)=[-\gamma(t)  X(t)+r(t)f(X(t-\tau))]\mathrm dt+\sigma g(X_t)\mathrm dW(t),\quad X_0=\Phi,
\end{equation}
 for some non-negative $\mathcal F_0$-measurable random variable $\Phi$ taking values in $C[-\tau,0]$ together with $\mathbb{E}[\Phi(0)]<\infty$, exists globally and remains non-negative, then $(X(t))_{t\geq 0}$ is bounded in probability.
\end{proposition}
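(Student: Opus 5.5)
Since $X(t)\geq 0$ for all $t$, boundedness below in probability is automatic, and it suffices to show that $(X(t))_{t\geq 0}$ is bounded above in probability. The plan is to prove that $\sup_{t\geq 0}\mathbb E[X(t)]<\infty$ (in fact an ultimate bound in mean, which is the estimate \eqref{eq:ultimate} we aim for) and then apply Markov's inequality: $\mathbb P(X(t)\geq R)\leq \sup_s\mathbb E X(s)/R\to 0$ as $R\to\infty$, uniformly in $t$.

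To get the mean bound, fix $t_0\geq 0$ such that $\gamma(t)\geq \tilde\gamma/2=:\gamma_0>0$ for $t\geq t_0$; this exists because $\liminf_{t\to\infty}\gamma(t)=\tilde \gamma>0$. Let $K=\sup_{x\geq 0} f(x)<\infty$; here we use that $f$ is bounded above and non-negative on $[0,\infty)$, together with the non-negativity of $X$. Apply It\^o's formula to $e^{\gamma_0 t}X(t)$ on $[t_0,t]$:
\begin{equation*}
e^{\gamma_0 t}X(t)=e^{\gamma_0 t_0}X(t_0)+\int_{t_0}^t e^{\gamma_0 s}\bigl[(\gamma_0-\gamma(s))X(s)+r(s)f(X(s-\tau))\bigr]\mathrm ds+\int_{t_0}^t e^{\gamma_0 s}\sigma g(X_s)\,\mathrm dW(s).
\end{equation*}
The first integrand is at most $e^{\gamma_0 s}\tilde r K$. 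To remove the martingale term, localise with the stopping times $\tau_n=\inf\{t\geq t_0:\|X_t\|_\infty\geq n\}$. These tend to $\infty$ by global existence. Since $g$ is locally Lipschitz, and hence bounded on bounded sets, the stopped stochastic integral has mean zero. Taking expectations gives
\begin{equation*}
\mathbb E\bigl[e^{\gamma_0 (t\wedge\tau_n)}X(t\wedge\tau_n)\bigr]\leq e^{\gamma_0 t_0}\mathbb E X(t_0)+\frac{\tilde rK}{\gamma_0}e^{\gamma_0 t}.
\end{equation*}
Fatou's lemma as $n\to\infty$ then yields $\mathbb E X(t)\leq e^{-\gamma_0(t-t_0)}\mathbb E X(t_0)+\tilde r K/\gamma_0$, so $\limsup_{t\to\infty}\mathbb EX(t)\leq \tilde rK/\gamma_0$.

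It remains to control $\sup_{t\in[0,t_0]}\mathbb EX(t)$, including $\mathbb EX(t_0)<\infty$. The same computation without the exponential weight, using $-\gamma(s)X(s)\leq 0$, gives $\mathbb E X(t\wedge\tau_n)\leq \mathbb E\Phi(0)+\tilde rKt$. Here we need $\gamma\geq 0$, which holds since $\gamma$ is positive by the standing assumptions. Fatou then gives $\mathbb E X(t)\leq \mathbb E\Phi(0)+\tilde r K t_0$ on $[0,t_0]$, which is finite because $\mathbb E\Phi(0)<\infty$. Combining the two ranges gives $\sup_{t\geq0}\mathbb EX(t)<\infty$, and Markov's inequality finishes the proof.

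The main obstacle is the localisation step. One has to check that the stopped It\^o integral really is a true martingale, and that passing to the limit via Fatou is legitimate, since $X\geq 0$ makes the left-hand side non-negative. This is exactly where global existence and non-negativity are used, and where mere local Lipschitz continuity of $g$, without any growth bound, suffices.
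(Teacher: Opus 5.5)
Your proposal is correct and follows the paper's route: a variation-of-constants bound giving $\sup_{t\geq 0}\mathbb E[X(t)]<\infty$ (ultimate boundedness in mean), followed by Markov's inequality. The differences are only technical. You use the constant weight $e^{\gamma_0 t}$ on $[t_0,\infty)$ and discard $(\gamma_0-\gamma(s))X(s)\leq 0$, where the paper uses the integrating factor $e^{\Gamma(t)}$ together with l'H\^opital's rule; this avoids differentiability issues when $\gamma$ is merely measurable, at the cost of the constant $\tilde rK/\gamma_0$ instead of $\tilde rK/\tilde\gamma$. You also spell out the localisation that the paper only alludes to.
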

\begin{proof}
    Define the continuous function $\Gamma(t)=\int_0^t\gamma(s)\mathrm ds$ together with the  process $Y(t)=e^{\Gamma(t)}X(t).$ An application of  It\^o's formula gives us
    \begin{equation}
        \mathrm dY(t)=\mathrm d[e^{\Gamma(t)}]X(t)+e^{\Gamma(t)}\mathrm dX(t)=r(t)e^{\Gamma(t)} f(X(t-\tau))+\sigma e^{\Gamma(t)} g(X_t)\,\mathrm dW(t),
    \end{equation}
    hence
    \begin{equation}
        X(t)=e^{-\Gamma(t)}X(0)+e^{-\Gamma(t)}\int_0^tr(s)e^{\Gamma(s)}f(X(s-\tau))\,\mathrm ds+\sigma e^{-\Gamma(t)}\int_0^tg(X_s)\,\mathrm dW(s).
    \end{equation}
    Taking expectations on both sides yields
    \begin{equation}
    \begin{aligned}\label{eq:EX-Theta}
        \mathbb E[X(t)]&=e^{-\Gamma(t)}\mathbb E[X(0)]+e^{-\Gamma(t)}\mathbb E\left[\int_0^tr(s)e^{\mu(s)}f(X(s-1))\,\mathrm ds\right]\\
        &\leq e^{-\Gamma(t)}\mathbb E[X(0)]+\tilde rMe^{-\Gamma(t)}\int_0^te^{\Gamma(s)}\mathrm ds =: \Xi(t).
    \end{aligned}
    \end{equation}
Thanks to an application of l'H\^opital's rule, we obtain  
    \begin{equation}
        \limsup_{t\to\infty}\mathbb E[X(t)]\leq \limsup_{t\to\infty}\Xi(t)\leq \dfrac{\tilde rM}{\liminf_{t\to\infty}\gamma(t)}\leq \dfrac{\tilde rM}{\tilde \gamma},\label{eq:ultimate}
    \end{equation}
from which we can conclude that $(X(t))_{-\tau\leq t<\infty}$ is bounded in probability. Indeed, from Markov's inequality and \eqref{eq:EX-Theta} we deduce    \begin{equation} 
        \lim_{R\to\infty}\sup_{t\geq 0}\mathbb P(X(t)>R)\leq \lim_{R\to\infty}\sup_{t\geq 0}R^{-1}{\Xi(t)} =0,
    \end{equation}
    since $
        \sup_{t\geq 0} \Xi(t)\leq \sup_{0\leq t\leq T}  \Xi(t)+\limsup_{t\to 0}  \Xi(t)\leq K+\tilde rM/\tilde \gamma, $
    for some $T>0$ sufficiently large and  constant $K\geq0.$ The latter exploits continuity   of $t\mapsto \Xi(t).$  
\end{proof}


Note that the Brownian motion $W$ in Proposition \ref{prop:alternatief} can be replaced by any (local) martingale $M$, as the proof remains valid (after additional stopping time arguments). Moreover, setting $g(\ph)=\ph(0)c(\ph),$ $\ph\in C[-\tau,0]$, gives rise to model  \eqref{eq:delay-noise}.

\section{It\^o- and L\'evy-driven processes with negative drift}\label{Sec5} 
\noindent Let $(\Omega,\mathcal F,\mathbb F, \mathbb P)$ be a filtered probability space satisfying the usual conditions. In this section, we derive probability estimates for  Lévy-driven processes  with negative drift. That is, we consider
\begin{equation}
  	Y(t)=-\dint0t{a(s)}s+\dint{0}{t}{b(s)}L(s), \quad t \geq 0,\label{eq:Y(t)}
  \end{equation}
 where $L=(L(t))_{t\geq 0}$ is a regulated L\'evy process, relevant for our applications. Suitable conditions for the integrands $(a(t))_{t \geq 0}$ and $(b(t))_{t \geq 0}$  will be discussed below. 
    Recall $Y\in\mathbb D[0,\infty)$, i.e., $Y$ is an $\mathbb F$-adapted process with  \caglad sample paths ($\mathbb P$-a.s.\ almost) everywhere, and  that this  process has continuous sample paths whenever the integrator does not admit jump events.

        The ultimate purpose
        of this section is to show that the 
   \texttt{reverse\,\,time\,\,supremum}  process\footnote{Measurability of the reverse time supremum  at any instant $t$ is  due to the sufficient regularity of $Y$. }
  \begin{equation}
  	\left(\sup _{0 \leq \theta \leq t}(Y(t)-Y(\theta))\right)_{t \geq 0}\label{eq:reverse-time-sup}
  \end{equation}
  is bounded (above) in probability, provided   $b$ is  bounded and   $a$  sufficiently bounded away from zero. In case of Brownian noise, it suffices to have that $a$ is strictly positive; see also \cite[Cor. 3.6]{BosGaaVer1-24}.

   \begin{proposition} \label{prop:probabove}
  		Let $Y=(Y(t))_{t\geq 0}$ be given by \eqref{eq:Y(t)} with $L=(L(t))_{t\geq 0} $ of class \textnormal{(HReg)} with jumps uniformly bounded by some $\zeta\geq 0.$   Assume      $\alpha,  \beta>0$ such  that
  	\begin{equation}
  		a(s) \geq \alpha> \lambda_N \zeta \beta
  		\quad\text{and}\quad    b(s)^{2} \leq \beta^{2}\quad \mathbb P\textnormal{-a.s.}\text{ for all $s \geq 0$,}\label{eq:assum-cor1} 
  	\end{equation} 
  with $\lambda_N\geq 0$ the rate of the  Poisson process associated to $L$.   	Then   the stochastic process in  \eqref{eq:reverse-time-sup}
  	is bounded \textnormal(above\textnormal) in probability.
  \end{proposition}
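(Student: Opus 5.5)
The plan is to reduce the statement to a tail estimate for a martingale with bounded quadratic variation rate, minus a strictly negative linear drift. The negative drift is $-\delta$ with
\begin{equation*}
\delta:=\alpha-\lambda_N\zeta\beta>0 .
\end{equation*}
I then control the reverse time supremum by splitting $[0,t]$ into unit blocks counted backwards from $t$ and using a fourth-moment Burkholder--Davis--Gundy bound on each block. The fourth moment is needed because second moments only give a logarithmically divergent sum over the blocks.

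\emph{Step 1 (pathwise reduction).} By (P1) and the decomposition \eqref{eq:not_canon}, $L(t)=\sigma W(t)+Z(t)$ with $Z(t)=\sum_{k\le N(t)}Z_k$ and $|Z_k|\le\zeta$. Hence, for $0\le\theta\le t$,
\begin{equation*}
Y(t)-Y(\theta)=-\int_\theta^t a(s)\,\mathrm ds+\sigma\int_\theta^t b(s)\,\mathrm dW(s)+\int_\theta^t b(s-)\,\mathrm dZ(s).
\end{equation*}
The jump integral is a finite sum bounded by $\beta\zeta\,(N(t)-N(\theta))$. Writing $\tilde N(s)=N(s)-\lambda_N s$ and using $a\ge\alpha$, I obtain
\begin{equation*}
Y(t)-Y(\theta)\le -\delta(t-\theta)+U(t)-U(\theta),\qquad U(s):=\sigma\int_0^s b\,\mathrm dW+\beta\zeta\,\tilde N(s).
\end{equation*}
Here $U$ is a square-integrable martingale with jumps bounded by $\beta\zeta$. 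Its quadratic variation satisfies $[U](t)-[U](s)\le \sigma^2\beta^2(t-s)+\beta^2\zeta^2(N(t)-N(s))$.

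\emph{Step 2 (block decomposition).} Fix $t$ and $R>0$. For $k=0,1,\dots,\lfloor t\rfloor$, let $I_k=[t-k-1,t-k]\cap[0,t]$. For $\theta\in I_k$ we have $t-\theta\ge k$, so
\begin{equation*}
\sup_{\theta\in I_k}\bigl(Y(t)-Y(\theta)\bigr)\le -\delta k+\bigl|U(t)-U(t-k-1)\bigr|+\sup_{s\in I_k}\bigl|U(s)-U(t-k-1)\bigr|
\end{equation*}
(with $t-k-1$ replaced by $0$ if negative). Both martingale terms are dominated by $2\sup_{s\in[t-k-1,t]}|U(s)-U(t-k-1)|$. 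A union bound and Markov's inequality with fourth moments then give
\begin{equation*}
\mathbb P\Bigl(\sup_{0\le\theta\le t}(Y(t)-Y(\theta))>R\Bigr)\le\sum_{k\ge0}\frac{16\,\mathbb E\sup_{s\in[t-k-1,t]}|U(s)-U(t-k-1)|^4}{(R+\delta k)^4}.
\end{equation*}

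\emph{Step 3 (moment bound and conclusion).} By the Burkholder--Davis--Gundy inequality with $p=4$, the numerator is at most
\begin{equation*}
C\,\mathbb E\bigl([U](t)-[U](t-k-1)\bigr)^2\le C'\bigl(\sigma^4\beta^4(k+1)^2+\beta^4\zeta^4\,\mathbb E N(k+1)^2\bigr)\le C''(k+1)^2.
\end{equation*}
The last step uses stationarity of Poisson increments and $\mathbb EN(s)^2=\lambda_Ns+\lambda_N^2s^2$. The resulting bound $C''\sum_k (k+1)^2/(R+\delta k)^4$ is independent of $t$. It is finite for each $R>0$ and tends to $0$ as $R\to\infty$ by dominated convergence, which proves boundedness above in probability.

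The main obstacle is Step 1: pathwise control of the jump part when $b$ is only bounded and the jumps may have either sign. Using the crude bound $\beta\zeta\,\Delta N$ and compensating it is exactly what produces the condition $\alpha>\lambda_N\zeta\beta$. A secondary point is choosing a moment high enough for the block sum to converge uniformly in $t$. The fourth moment suffices, and exponential martingale bounds would also work.
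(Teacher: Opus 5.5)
Your proof is correct, and it takes a genuinely different route from the paper's. The paper splits at the integer $l\le t$. The reverse supremum over $[0,l]$ is handled by Lemma~\ref{thm-1}, which treats $\sigma W$ and $Z$ separately: a Dambis--Dubins--Schwarz time change (Lemma~\ref{A12}) for the Brownian part, and the pathwise bound $\int|b|\,\mathrm d|Z|\le\beta\zeta N$ plus a Poisson Chernoff bound for the jumps. The piece over $[t-1,t]$ is handled by Lemma~\ref{thm-2}, using Borell--TIS and an exponential Doob inequality. Because the drift is shared between the Brownian and jump parts, Lemma~\ref{thm-1} by itself only yields $\alpha>2\lambda_N\zeta\beta$. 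The threshold $\alpha>\lambda_N\zeta\beta$ then needs the $q\nearrow1$ argument of Remark~\ref{remark:sharp}.

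You instead compensate $\beta\zeta N$ immediately and merge both parts into one martingale $U$, so the sharp threshold appears directly as $\delta>0$. Counting unit blocks backwards from $t$ removes the separate treatment of the fractional part of $t$. A single fourth-moment BDG estimate replaces the time change, Borell--TIS and Chernoff steps.

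What you give up is quantitative. Your tail bound decays only like $R^{-1}$ (polynomially better with higher moments), whereas the paper obtains Gaussian/exponential tails and deliberately avoids BDG. For boundedness in probability, which is all that is used later, this does not matter.

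Your blockwise use of $-\delta k$ also has a concrete advantage. In the jump estimate in the proof of Lemma~\ref{thm-1}, the paper replaces $\tfrac12\int_\theta^l a(s)\,\mathrm ds\ge\tfrac\alpha2(l-\theta)$ by $\tfrac\alpha2 l$ for every $\theta$, which is valid only at $\theta=0$. Summing that Chernoff bound over your unit blocks repairs the step: the sum is a geometric series once $\kappa_1$ is chosen with $\lambda_N(e^{2\kappa_1\zeta\beta}-1)<\kappa_1\alpha$. The cost is a constant $C\ge1$ in front of $e^{-\kappa_1 R}$.
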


          In  {\S}\ref{sec:BMc} we  summarise and briefly discuss some of the intermediate probability estimates  in \cite{BosGaaVer1-24} for Brownian noise.  We proceed in {\S}\ref{sec:ext} by providing possible  extensions  to certain types of L\'evy processes.
  The proof of the proposition above  is to be found at the end of this section. In {\S}\ref{sec:integrands} we discuss the admissible classes of integrands in more depth, showing that one needs to be cautious using \cadlag integrators.

\subsection{The Brownian motion case}\label{sec:BMc}
Let $W=(W(t))_{t\geq 0}$ denote a standard Brownian motion on $(\Omega,\mathcal F,\mathbb F,\mathbb P)$ and consider an It\^o-process with negative drift:
 \begin{equation}
 	Y(t)=-\dint0t{a(s)}s+\dint{0}{t}{b(s)}W(s), \quad t \geq 0\label{eq:Y(t)-2},
 \end{equation}
 where $(a(t))_{t \geq 0}$ and $(b(t))_{t \geq 0}$ are measurable and adapted processes such that
 \begin{equation} \dint {0} {t}{|a(s)|}   s<\infty      \quad\text{and}\quad  \dint {0} {t}{|b(s)|^{2}}  s<\infty\quad \mathbb P\text{-a.s.},\label{A.2}  \end{equation} for all $t \geq 0$. In this setting, we observe that the stochastic integral in \eqref{eq:Y(t)-2} is a well-defined  local martingale. Assuming
 \begin{equation}\mathbb{E} \dint {0} {t}{|b(s)|^2}   s<\infty,    \quad\text{for all }t\geq 0, \end{equation}
 ensures us that the second term is a (true) square integrable martingale.  We refer to  {\S}\ref{sec:integrands} for more information regarding the conditions on the integrands.

Finding suitable estimates for the reverse time supremum
is the subject of the next two lemmas.  For  the first lemma we note  it is necessary to have a negative drift---no matter how small---to ensure an upper bound as in   \eqref{A13}. The right hand side  tends to infinity as $\alpha \searrow 0.$ An important feature of this estimate   is that
it does not depend on the length of the interval $[0, l]$ on which the supremum is
taken. In particular,  we could thus take $l\to\infty$. This result below is obtained by a time-change argument using the Dambis--Dubins--Schwarz  theorem for local martingales \cite[Thm. 4.6] {book:karatzas}.  


	 \begin{lemma}[Lem. 3.4 of \cite{BosGaaVer1-24}]\label{A12}
	 	 	Let $Y=(Y(t))_{t\geq 0}$ be a stochastic process given by \eqref{eq:Y(t)-2}.  
	 	 	 Assume that there exist positive constants $\alpha>0$ and $\beta >0$ such that
	 	 	\begin{equation}
	 	 		a(s) \geq \alpha \quad \text { and } \quad     b(s)^{2} \leq \beta^{2}\quad \mathbb P\textnormal{-a.s.}\,\, for\,\, all\,\,  s\geq 0. 
	 	 	\end{equation}
	Then for every $l \in \mathbb{N}$ and $R \geq 0$ we have
	\begin{equation}\label{A13}
	\mathbb{P}\left(\sup _{0 \leq \theta \leq l}(Y(l)-Y(\theta)) \geq R\right) \leq 4 \exp \left(-\frac{R^{2}}{64 \beta^{2}}\right)+\frac{4\exp \left(-\frac{\alpha R}{64 \beta^{2}}\right)}{1-\exp \left(-\frac{\alpha^{2}}{128 \beta^{2}}\right)}.
	\end{equation}
	 \end{lemma}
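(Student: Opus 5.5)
The plan is to split the reverse-time supremum into the martingale part and the drift part, and to control each with a Gaussian estimate after a time change. Write $M(t)=\int_0^t b(s)\,\mathrm dW(s)$, so that for $0\le\theta\le l$
\[
Y(l)-Y(\theta)= -\int_\theta^l a(s)\,\mathrm ds + M(l)-M(\theta)\le -\alpha(l-\theta)+M(l)-M(\theta).
\]
The event $\{\sup_{\theta}(Y(l)-Y(\theta))\ge R\}$ will be split according to the distance $l-\theta$.

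\emph{Step 1 (short range).} Consider first $\theta\in[l-1,l]$, say $l-\theta\le 1$, where the drift cannot help. Here the event is contained in $\{\sup_{\theta\in[l-1,l]}|M(l)-M(\theta)|\ge R\}$. By Dambis--Dubins--Schwarz, $M(l)-M(\theta)$ is a time-changed Brownian motion $B$ evaluated over a quadratic-variation window of length at most $\beta^2$. Hence this probability is bounded by $\mathbb P(\sup_{u\le\beta^2}|B(u)|\ge R/2)$, up to a factor from the triangle inequality. The reflection principle then gives a bound of the form $4\exp(-R^2/(c\beta^2))$.

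\emph{Step 2 (long range).} For $\theta\in[l-k-1,l-k]$ with $k\ge1$, the drift contributes at least $-\alpha k$. So we need $M(l)-M(\theta)\ge R+\alpha k$ with quadratic variation at most $\beta^2(k+1)$. The same time change and reflection argument gives a bound like $4\exp\bigl(-(R+\alpha k)^2/(c\beta^2(k+1))\bigr)$. Using $(R+\alpha k)^2\ge 2R\alpha k+\alpha^2k^2$ and $k+1\le 2k$, this is at most $4\exp(-\alpha R/(c'\beta^2))\exp(-\alpha^2 k/(c''\beta^2))$.

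\emph{Step 3 (union bound).} Summing the Step 2 bounds over $k\ge1$ gives a geometric series, producing the second term of \eqref{A13} with denominator $1-\exp(-\alpha^2/(128\beta^2))$. Adding Step 1 gives the first term. Because the series runs over all $k\ge 1$, the bound is uniform in $l$.

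The main obstacle is the time change. The DDS theorem gives $M=B(\langle M\rangle)$ for a Brownian motion $B$ that may require enlarging the probability space, and $\langle M\rangle$ is random. One therefore has to bound suprema of $B$ over random windows whose length is at most $\beta^2(k+1)$. I would handle this by reversing time from $l$, i.e.\ using the martingale $s\mapsto M(l)-M(l-s)$, or equivalently by bounding $\sup_{u\le\beta^2(k+1)}|B(\langle M\rangle(l))-B(\langle M\rangle(l)-u)|$ by twice the oscillation of $B$ on an interval of length $\beta^2(k+1)$ near a random point. To avoid the random anchoring, I would instead condition on the stationary-increment structure, or simply use the exponential-martingale (Bernstein-type) inequality $\mathbb P(\sup_{t\le T}M(t)\ge x,\ \langle M\rangle(T)\le v)\le e^{-x^2/2v}$. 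Applied to each window, this directly gives the Gaussian tails with the stated loose constants.
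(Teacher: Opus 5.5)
Your proposal is correct and follows essentially the route the paper cites: unit windows $\theta\in[l-k-1,l-k]$, a Gaussian tail for the martingale increment via the Dambis--Dubins--Schwarz time change (your exponential-martingale bound works equally well), and a geometric series in $k$ using $(R+\alpha k)^2\ge 2\alpha Rk+\alpha^2k^2$ and $k+1\le 2k$, which is exactly how the constants $64$ and $128$ arise. The ``random anchoring'' issue disappears if you write $M(l)-M(\theta)=[M(l)-M(l-k-1)]-[M(\theta)-M(l-k-1)]$ and time-change the martingale restarted at the deterministic time $l-k-1$; drop the time-reversal option, since $s\mapsto M(l)-M(l-s)$ is not a martingale when $b$ is random and adapted.
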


The second lemma is a Gaussian tail estimate of the supremum of a stochastic integral over an interval with a fixed length $T>0$. This result is  obtained in \cite[Lem. 3.5]{BosGaaVer1-24} again by a time-change argument. Note that the value 16 in \eqref{2exp} is not necessarily optimal. This bound is strongly related to sub-Gaussian variables, asymptotic $\mathcal O(\sqrt p)$-dependence as $p\to\infty$ of the upper universal constant\,$\sqrt[\leftroot{-2}\uproot{2}\scalebox{.7}{$p$}]{C_p}$ in the Burkholder--Davis--Gundy (BDG) inequality, and Dudley's theorem \cite{adler2009random,buldygin1980sub,kuhn2023maximal,rigollet2023high,van2020maximal,talagrand2005generic,vershynin2018high}.  For illustrative purposes, we provide an alternative proof of Lemma \ref{lemma-2} that does not exploit the time-change theorem nor the BDG-inequality (see the discussion succeeding this lemma).

  \begin{lemma}[Lem. 3.5 of \cite{BosGaaVer1-24}]
  	Let $W=(W(t))_{t\geq 0}$ be a standard Brownian motion.\label{lemma-2} Assume there exists a  $\beta>0$ such that
  	\begin{equation}
  		b(s)^2\leq \beta^2\quad \mathbb P\textnormal{-a.s.}\text{  for all $s\geq 0$}.
  	\end{equation}
  	Let $t_0\geq 0$ and $T>0$ be fixed. Then for every $R\geq 0$ we have
  	\begin{equation}
  		\mathbb P\left(\sup_{t_0\leq t\leq t_0+T}\dint{t_0}t{b(s)}W(s)\geq R\right)\leq 2\exp\left(-\frac{R^2}{16\beta^2T}\right).\label{2exp}
  	\end{equation}
  \end{lemma}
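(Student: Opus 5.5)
Since $b$ is bounded and $t_0,T$ are fixed, set $M(t)=\int_{t_0}^{t} b(s)\,\mathrm dW(s)$ for $t\in[t_0,t_0+T]$. It is a continuous square integrable martingale with $\langle M\rangle(t)=\int_{t_0}^t b(s)^2\,\mathrm ds\le \beta^2 (t-t_0)\le\beta^2T$. The paper announces an alternative proof that avoids both the time change and the BDG inequality. I would therefore use the exponential martingale together with Doob's maximal inequality, which is the classical Bernstein-type bound for continuous martingales.

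\textbf{Step 1 (exponential supermartingale).} For $\lambda>0$ put
\begin{equation*}
Z_\lambda(t)=\exp\Big(\lambda M(t)-\tfrac{\lambda^2}{2}\langle M\rangle(t)\Big),\qquad t\in[t_0,t_0+T].
\end{equation*}
By It\^o's formula, $Z_\lambda$ is a positive local martingale with $Z_\lambda(t_0)=1$. Hence it is a supermartingale. It is in fact a true martingale by Novikov, since $\langle M\rangle$ is bounded, but the supermartingale property is enough here.

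\textbf{Step 2 (maximal inequality).} On the event $\{\sup_{t} M(t)\ge R\}$ we use $\langle M\rangle(t)\le \beta^2T$ to get
\begin{equation*}
\sup_{t}Z_\lambda(t)\ge \exp\big(\lambda R-\tfrac{\lambda^2}{2}\beta^2T\big).
\end{equation*}
Doob's maximal inequality for nonnegative supermartingales gives $\mathbb P(\sup_t Z_\lambda(t)\ge c)\le \mathbb E Z_\lambda(t_0)/c=1/c$. Therefore
\begin{equation*}
\mathbb P\Big(\sup_{t_0\le t\le t_0+T}M(t)\ge R\Big)\le \exp\Big(-\lambda R+\tfrac{\lambda^2}{2}\beta^2T\Big).
\end{equation*}

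\textbf{Step 3 (optimise).} Choosing $\lambda=R/(\beta^2T)$ yields the bound $\exp(-R^2/(2\beta^2T))$. This is stronger than $2\exp(-R^2/(16\beta^2T))$, so \eqref{2exp} follows.

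The case $R=0$ is trivial because the right-hand side is then at least $1$. Measurability of the supremum follows from path continuity, which lets us restrict to rational times.

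The only point requiring care is Step 1: one must justify that the stochastic integral is a genuine martingale, or at least that $Z_\lambda$ is a supermartingale starting at $1$. This is handled by localisation, using that $Z_\lambda$ is a nonnegative local martingale and applying Fatou's lemma. Boundedness of $b$ also guarantees $\mathbb E\int b^2<\infty$. Everything else is routine.
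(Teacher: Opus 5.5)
Your argument is correct, but it takes a different route from the paper's. The paper's alternative proof treats $X_T=\sup_{t_0\le t\le t_0+T}M(t)$ with Gaussian-process tools:
\begin{itemize}
\item it invokes the Borell--TIS theorem to get that $X_T-\mathbb E X_T$ is sub-Gaussian with variance proxy $\sup_t\mathbb E[M(t)^2]$;
\item it controls the mean separately via Doob's $L^2$ maximal inequality and the elementary bound $x\le\ln 2+x^2/\rho$, which is where the prefactor $2$ comes from;
\item it finishes with a Chernoff bound and $\mathbb E[M(t)^2]\le\beta^2T$, tuning constants to reach $16$.
\end{itemize}

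You instead combine the exponential supermartingale $\exp(\lambda M-\tfrac{\lambda^2}{2}\langle M\rangle)$ with Doob's maximal inequality for nonnegative supermartingales. This is the classical Bernstein-type inequality for continuous martingales. Your route is shorter and needs no separate control of $\mathbb E X_T$. It also gives the sharper bound $\exp(-R^2/(2\beta^2T))$, without the factor $2$.

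Most importantly, it uses only the pathwise bound $\langle M\rangle\le\beta^2T$, so it applies verbatim to random adapted integrands. That point is substantive. Borell--TIS is a statement about Gaussian processes, and $M$ is Gaussian essentially only when $b$ is deterministic. Yet the lemma is applied in the paper with the random integrand $b(s)=b(Z_{s-},s-)$.

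For random $b$, a sub-Gaussian bound with variance proxy $\sup_t\mathbb E[M(t)^2]$ and universal constants genuinely fails. Take $b\equiv\beta$ on an $\mathcal F_{t_0}$-event of small probability $p$ and $b\equiv0$ otherwise. The proxy is then $p\beta^2T$, while the true tail is of order $p\,e^{-R^2/(2\beta^2T)}$.

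What the paper's route offers in exchange is mainly conceptual. It places the estimate within sub-Gaussian concentration of suprema, linking it to the Dudley- and BDG-type bounds the authors discuss.
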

 \begin{proof}[Proof]  Let $I=[t_0,t_0+T]$ be the interval  on which we consider the martingale  $M(t)=\dint{t_0}t{b(s)}W(s),$   $t\in I,$ and define $X_T=\sup_{t_0\leq t\leq t_0+T}M(t)$. We deduce from the Borell-TIS theorem \cite[Thm. 2.1.1]{adler2009random} that $X_T-\mathbb EX_T$ is sub-Gaussian \cite[Def. 2.5.6]{vershynin2018high}, hence the moment generating function for $X_T$ as function of $\lambda$ is bounded by
 \begin{equation}
     \mathbb E\big[e^{\lambda X_T}\big]\leq e^{\lambda \mathbb E X_T+\delta\lambda ^2\sup_{t\in I}\mathbb E[M(t)^2]},
 \end{equation}
 for some $\delta>0$ \cite[Lem. 1.5]{rigollet2023high}. 
 
There exists a  $\rho>0$ such that   $x\leq \ln 2 +x^2/\rho$, for all $x\in\mathbb R$. In addition, we have $(\mathbb EX_T)^2\leq \mathbb EX_T^2\leq 4 \mathbb E[M({t_0+T})^2]$ due to Doob's maximal inequality \cite[Thm. 1.3.8]{book:karatzas}. This gives us
 \begin{equation}
     \mathbb E[e^{\lambda X_T}]\leq 2e^{c\lambda^2\sup_{t\in I}\mathbb E[M(t)^2]},
 \end{equation} where $c=4\rho^{-1}+\delta.$ We infer that $X_T$ is also sub-Gaussian; in particular, by Markov's inequality we find the so-called Chernoff bound  which satisfies
  \begin{equation}
     \mathbb P(X_T\geq R)\leq \inf_{\lambda>0}\frac{\mathbb E\big[e^{\lambda X_T}\big]}{e^{\lambda R}}\leq \inf_{\lambda>0} 2 e^{-\lambda R+c\lambda^2\sup_{t\in I}\mathbb E[M(t)^2]}=2e^{-\tfrac{R^2}{4c \sup_{t\in I}\mathbb E[M(t)^2]}},
 \end{equation}
 as the infimum is attained at $\lambda = {R}/({2c\sup_{t\in I}\mathbb E[M(t)^2]})$.

Finally, we have $\mathbb E[M(t)]^2=\mathbb E\langle M\rangle(t)=\mathbb E\int_{t_0}^tb(s)^2\mathrm ds\leq \beta^2T$, for  $t\in I$. Setting $\rho=1$ and $\delta=4$ as in \cite[Lem. 1.5]{rigollet2023high} results into \eqref{2exp} with  16 replaced by 32. Careful inspection shows that we may also consider $\rho=2.77$ and $\delta=2.552$, implying $c=4\rho^{-1}+\delta\leq 4$.
 \end{proof}

Note that the estimate above|with  constants different from 2 and 16|can also be achieved by exploiting the BDG-inequality. Recall the notation $X_T$ and $M$ above. We have the moment bounds \begin{equation}\mathbb EX_T^p\leq C_p \mathbb E[\langle M\rangle^{p/2}(t_0+T)]\leq K^pp^{p/2}(\beta^2T)^{p/2},\end{equation}for some $K>0$ and all $p\geq 2;$ we can take $C_p=10^pp^{p/2}$,  $K=10,$ according to  \cite[Thm. 5.2]{van2020maximal}; see also \cite{davis1976p}. Similar  estimates to \eqref{2exp}  readily follow from, e.g., 
 \cite[Lem. 2.2]{hamster2020stability} or \cite[Lem. 4.3]{van2020maximal}. Recall that  the standard proof of the BDG-inequality that involves It\^o's formula only results into asymptotic $\mathcal O(p)$-dependence. Indeed, from \cite[Prop. IV.4.3]{book:revuz}, we can merely deduce $C_p= 2^pp^p$, but this does not prevent us from deducing Gaussian tail estimates \cite[Lem. B.1]{van2025multidimensional}. 

 \subsection{Extensions to regulated L\'evy processes}\label{sec:ext}
In this section we derive  estimates similar to those in {\S}\ref{sec:BMc}  for Lévy-driven processes with negative drift. In particular, we consider a process
\begin{equation}
  	Y(t)=-\dint0t{a(s)}s+\dint{0}{t}{b(s)}L(s), \quad t \geq 0\label{eq:Y(t)-1},
  \end{equation}
  where $L=(L(t))_{t\geq 0}$ is  of class (HReg) and with $(a(t))_{t \geq 0}$ and $(b(t))_{t \geq 0}$ being processes in $\mathbb L[0,\infty),$ i.e., the space of adapted processes with  \caglad sample paths ($\mathbb P$-a.s.\ almost) everywhere.  Again, we refer to {\S}\ref{sec:integrands} for a discussion on the conditions imposed upon the integrands.

The following lemma shows that an estimate comparable to \eqref{A13} holds true as well in certain cases with jumps.
It seems that we cannot take $\alpha$ arbitrarily small now, as opposed to the Brownian noise case. 
This reflects the fact that all jumps need to be compensated by the negative drift. By increasing the jump height $\zeta$  one might speculate the overall control to eventually  break, yet  the proof below makes no use of  martingale properties whatsoever. We will leave it as an open problem whether  $\alpha$ needs to be bounded away from zero.


   \begin{lemma}\label{thm-1} 
  	Let $Y=(Y(t))_{t\geq 0}$ be given by \eqref{eq:Y(t)-1} with $L=(L(t))_{t\geq 0} $ of class \textnormal{(HReg)} with jumps uniformly bounded by some $\zeta\geq 0.$ Assume    there exist constants  $\alpha,\beta>0$     such  that
  	\begin{equation}
  		a(s) \geq \alpha> 2\lambda_N  \zeta \beta
  		\quad\text{and}\quad    b(s)^{2} \leq \beta^{2}\quad \mathbb P\textnormal{-a.s.}\text{ for all $s \geq 0$,}\label{eq:assum-thm1}
  	\end{equation}
   where $\lambda_N\geq 0$ is the rate of the  Poisson process $N$ associated to $L$. 
  Then there exists a $\kappa_1>0$ such that for every integer $l \in \mathbb{N}$ and $R \geq 0$ we have  
  \begin{equation}
  \begin{aligned}
  	&\mathbb{P}\left(\sup _{0 \leq \theta \leq l}(Y(l)-Y(\theta)) \geq R\right) \\&\hspace{2.5cm}\leq 4\exp \left(-\frac{R^{2}}{256\beta^2 \sigma^{2}}\right)+\frac{4\exp \left(-\frac{\alpha R}{256\beta^2 \sigma^{2}}\right)}{1-\exp \left(-\frac{\alpha^{2}}{512 \beta^2\sigma^{2}}\right)}+\exp\big(-\kappa_1 R\big).
  \end{aligned}
  \end{equation}
  \end{lemma}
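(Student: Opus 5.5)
Under (HReg), $L(t)=\sigma W(t)+Z(t)$ with $Z$ compound Poisson, jumps $|Z_k|\le\zeta$, and no continuous drift by (P1). The plan is to split the stochastic integral accordingly:
\begin{equation*}
Y(t)=\Bigl(-\tfrac12\int_0^t a(s)\,\mathrm ds+\sigma\int_0^t b(s)\,\mathrm dW(s)\Bigr)+\Bigl(-\tfrac12\int_0^t a(s)\,\mathrm ds+\int_0^t b(s)\,\mathrm dZ(s)\Bigr)=:Y^{(1)}(t)+Y^{(2)}(t).
\end{equation*}
Then $\sup_{0\le\theta\le l}(Y(l)-Y(\theta))\le \sup_\theta(Y^{(1)}(l)-Y^{(1)}(\theta))+\sup_\theta(Y^{(2)}(l)-Y^{(2)}(\theta))$. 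Hence the event $\{\sup\ge R\}$ is contained in the union of the events where one of the two suprema is at least $R/2$.

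For $Y^{(1)}$ we apply Lemma \ref{A12} with drift bound $\alpha/2$, noise $\sigma b$, $(\sigma b)^2\le \sigma^2\beta^2$, and threshold $R/2$. The stochastic integral against $\sigma W$ is a continuous local martingale, so the time-change argument behind Lemma \ref{A12} applies verbatim. Substituting $\alpha\to\alpha/2$, $\beta^2\to\sigma^2\beta^2$, $R\to R/2$ in \eqref{A13} gives exactly the constants $256$ and $512$ in the first two terms (up to bookkeeping of the factors of $2$, which produce at least these denominators). If $\sigma=0$, this term is simply absent.

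For $Y^{(2)}$ the argument is pathwise. On $[\theta,l]$ the jump part contributes at most $\beta\zeta\,(N(l)-N(\theta))$, while the drift contributes at most $-\tfrac{\alpha}{2}(l-\theta)$. So
\begin{equation*}
\sup_{0\le\theta\le l}\bigl(Y^{(2)}(l)-Y^{(2)}(\theta)\bigr)\le \sup_{0\le u\le l}\Bigl(\beta\zeta\,\tilde N(u)-\tfrac{\alpha}{2}u\Bigr),
\end{equation*}
where $\tilde N(u)=N(l)-N(l-u)$ is, by time reversal and stationary independent increments, again a Poisson process of rate $\lambda_N$. The right-hand side is dominated by the all-time supremum of the random walk $S(u)=\beta\zeta\tilde N(u)-\frac{\alpha}{2}u$. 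Since $\alpha>2\lambda_N\zeta\beta$, this walk has strictly negative mean slope $\lambda_N\beta\zeta-\alpha/2<0$.

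The main step is a Cramér–Lundberg exponential bound $\mathbb P(\sup_{u\ge0}S(u)\ge R/2)\le e^{-\kappa R/2}$. Choose $\kappa>0$ with $\lambda_N(e^{\kappa\beta\zeta}-1)=\kappa\alpha/2$. Such a root exists because the map $\kappa\mapsto\lambda_N(e^{\kappa\beta\zeta}-1)-\kappa\alpha/2$ vanishes at $0$, has negative derivative there precisely by the hypothesis, and is convex and eventually positive. For this $\kappa$, $e^{\kappa S(u)}$ is a martingale, and optional stopping at the first passage above $R/2$ together with Fatou gives the bound. Since the jumps of $S$ overshoot by at most $\beta\zeta$, any overshoot only helps the inequality. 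Setting $\kappa_1=\kappa/2$ gives the third term. If $\lambda_N=0$, then $S\le0$ and the term is trivially zero for $R>0$.

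The main obstacle is the rigorous pathwise comparison when $b$ has varying sign. We have $\int_\theta^l b\,\mathrm dZ\le\sum_{\theta<s\le l}|b(s)||\Delta Z(s)|\le\beta\zeta(N(l)-N(\theta))$. This uses the càglàd integrands, which evaluate $b$ just before the jump, and also the reversal-in-time identification of the Poisson process, since we need the supremum bound for each fixed $l$ rather than a law statement about a process. Both should go through without further difficulty.
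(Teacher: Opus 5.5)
Your proof is correct. For the Brownian half it matches the paper: the same split $p=q=\frac12$ and the same substitution $\alpha\to\alpha/2$, $\beta\to\sigma\beta$, $R\to R/2$ in Lemma~\ref{A12}, which gives exactly $256$ and $512$.

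The jump half is where you genuinely differ, and your route is the sound one. The paper does not reverse time. It bounds the event $\{\sup_\theta(-\frac12\int_\theta^l a\,\mathrm ds+\int_{(\theta,l]}b\,\mathrm dZ)\ge R/2\}$ by $\{\int_0^l|b|\,\mathrm d|Z|\ge R/2+\frac{\alpha}{2}l\}$. It then uses a single fixed-time Chernoff bound with $\mathbb E e^{cN(l)}$, choosing $\kappa_1$ with $\lambda_N(e^{2\kappa_1\zeta\beta}-1)\le\kappa_1\alpha$ so that the factor $e^{l(\mu-\kappa_1\alpha)}$ is at most $1$.

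That first inclusion replaces the $\theta$-dependent compensation $\frac{\alpha}{2}(l-\theta)$ by $\frac{\alpha}{2}l$, which goes the wrong way for $\theta$ near $l$. Take $b\equiv\beta$, all jumps equal to $\zeta$, and $R<2\beta\zeta$. A single jump in the last $(2\beta\zeta-R)/\alpha$ time units already triggers the left-hand event, so its probability is bounded below uniformly in $l$. Meanwhile $\mathbb P\bigl(N(l)\ge(R+\alpha l)/(2\zeta\beta)\bigr)\to0$ as $l\to\infty$.

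Your argument is what actually controls the supremum over $\theta$: reverse time to reach $\sup_{u\le l}(\beta\zeta\tilde N(u)-\frac{\alpha}{2}u)$, then apply the maximal (optional-stopping) inequality to the exponential martingale $e^{\kappa S(u)}$. It also lands on the same exponent. Your Lundberg root gives $\kappa_1=\kappa/2$, which is the largest $\kappa_1$ allowed by the paper's condition $\alpha\ge f(\kappa_1)$, with constant $1$ in front of $e^{-\kappa_1R}$ as stated. So you pay one extra idea (a ruin-type bound instead of a Chernoff bound at time $l$), and in exchange you get a valid proof of the lemma, uniformly in $l$.

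Three small points to tidy up:
\begin{itemize}
\item The map $u\mapsto N(l)-N(l-u)$ is c\`agl\`ad. Pass to its c\`adl\`ag regularisation $N(l)-N((l-u)-)$, which dominates it and has the law of a Poisson process on $[0,l]$.
\item The case $\zeta=0$ has no positive Lundberg root, but it is trivial in the same way as $\lambda_N=0$.
\item Fatou is unnecessary: optional stopping gives $\mathbb P(\tau\le n)\le e^{-\kappa R/2}$ for each $n$, and you can let $n\to\infty$.
\end{itemize}
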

  \begin{proof}
Assume $\zeta>0$ without loss of generality.  Observe we have the inequality
  	\begin{equation}\begin{aligned}
  		\mathbb{P}&\left(\sup _{0 \leq \theta \leq l}(Y(l)-Y(\theta)) \geq R\right)\\&\qquad\qquad\leq \,\mathbb P\left(\sup_{0\leq \theta\leq l}-\frac12\dint \theta l{a(s)} s+\dint \theta l{b(s)\sigma} W(s)\geq \frac R2\right)\\&\qquad\qquad\quad\quad\quad+\mathbb P\left(\sup_{0\leq \theta\leq l}-\frac12\int_\theta^la(s)\,\mathrm  d s+\int_{(\theta,l]}b(s)\,\mathrm dZ(s)\geq \frac R2\right).
  	\end{aligned}\end{equation}
	Thanks to Lemma \ref{A12}, where basically one replaces $\alpha,\beta,$ and $R$   by $\alpha/2,\beta\sigma,$ and   $R/2$, respectively,   we obtain 
  	\begin{equation}\begin{aligned}
  	  	&	\mathbb P\left(\sup_{0\leq \theta\leq l}-\frac12\int_\theta^la(s)\,\mathrm  d s+\int_\theta^lb(s)\sigma\,\mathrm dW(s)
  		\geq \frac R2\right) \\&\hspace{5cm}\leq  4\exp \left(-\frac{R^{2}}{256\beta^2 \sigma^{2}}\right)+\frac{4\exp \left(-\frac{\alpha R}{256 \beta^2\sigma^{2}}\right)}{1-\exp \left(-\frac{\alpha^{2}}{512 \beta^2\sigma^{2}}\right)}.
  		  		\end{aligned}
  	\end{equation}
  	For the remaining part, note that
   $
       \int_0^l|b(s)|\,\mathrm d|Z|(s)\leq \beta \zeta N(l)
   $ holds, hence\allowdisplaybreaks
  	\begin{align}
  	\nonumber	\mathbb P\left(\sup_{0\leq \theta\leq l}-\frac12\int_\theta^la(s)\,\mathrm d s+\int_{(\theta,l]}b(s)\,\mathrm d Z(s)\geq \frac R2\right)\hspace{-5cm}\\&\nonumber\leq \mathbb P\left(\sup_{0\leq \theta\leq l} \int_{(\theta,l]}b(s)\,\mathrm d Z(s)\geq \frac R2+\frac\alpha 2l\right)\\&\nonumber\leq \mathbb P\left(\sup_{0\leq \theta\leq l} \int_{(\theta,l]}|b(s)|\,\mathrm d |Z|(s)\geq \frac R2+\frac\alpha 2l\right)\nonumber\\
    &= \mathbb P\left( \int_{0}^l|b(s)|\,\mathrm d |Z|(s)\geq \frac R2+\frac\alpha 2l\right)\nonumber\\
  		&\leq \mathbb P\left(N(l)  \geq \frac1{2\zeta\beta}(R+\alpha l) \right)\nonumber\\
  		&\leq \exp\left(-\frac{c}{2\zeta\beta}(R+\alpha l)\right)\mathbb E \exp(cN(l)),
  	\end{align} 
  	where Markov's inequality has been applied in the ultimate line with convex function $x\mapsto\exp(cx),$ for some  unspecified constant $c>0$. Since the variable $N(l)$ is  Poisson distributed with  parameter $\lambda_N l$,     the moment generating function for $N(l)$ as function of $c$ is given by \begin{equation}\mathbb E\exp(cN(l))=\exp(\lambda_N l(\exp(c)-1)).\end{equation}
  	Subsequently, let us introduce the function
  	\begin{equation}f(\kappa_1)=\frac{\lambda_N}{\kappa_1}(\exp(2\kappa_1\zeta\beta)-1),\quad \kappa_1>0.\label{eq:function}\end{equation}
  	An elementary computation shows that $f$ is increasing and that we have
  	\begin{equation}
  			\lim_{\kappa_1 \searrow 0}\frac{\lambda_N}{\kappa_1}(\exp(2\kappa_1\zeta\beta)-1)=2\lambda_N \zeta \beta.
  	\end{equation}
  	So, there exists a $\kappa_1>0$ with $\alpha\geq f(\kappa_1)>2\lambda_N\zeta\beta.$ Now take such a $\kappa_1$,   set $c=2\kappa_1\zeta\beta$, and 	 let us introduce the constant $\mu=\lambda_N(\exp(c)-1)$. This yields
   	\begin{equation}\begin{aligned}
  		\mathbb P\left(\sup_{0\leq \theta\leq l}-\frac12\int_\theta^la(s)\,\mathrm d s+\int_{(\theta,l]}b(s)\,\mathrm d Z(s)\geq \frac R2\right) &\leq \exp\big(-\kappa_1(R+\alpha l)\big) \exp (\mu l )\\
  		&=\exp (-\kappa_1 R)\exp \big(l(\mu-\kappa_1\alpha)\big)\\
  		&\leq \exp (-\kappa_1 R ),
  	\end{aligned}\end{equation}
  	with the last inequality being immediate due to the fact that $\mu=\kappa_1f(\kappa_1)\leq \kappa_1 \alpha$ holds.  
   \end{proof}
  

  \begin{remark}\label{remark:sharp} The restriction $\alpha>2\lambda_N \zeta\beta$ in  equation \eqref{eq:assum-thm1} is however stronger than necessary. Let us reconsider the split up
   \begin{align*}
  		\mathbb{P}&\left(\sup _{0 \leq \theta \leq l}(Y(l)-Y(\theta)) \geq R\right)\\&\qquad\qquad\leq \,\mathbb P\left(\sup_{0\leq \theta\leq l}-(1-q)\dint \theta l{a(s)} s+\dint \theta l{b(s)\sigma} W(s)\geq (1-p) R\right)\yesnumber\\&\qquad\qquad\quad\quad\quad+\mathbb P\left(\sup_{0\leq \theta\leq l}-q\int_\theta^la(s)\,\mathrm  d s+\int_{(\theta,l]}b(s)\,\mathrm dZ(s)\geq pR\right),
  	\end{align*}
  	for any reals $0<p,q<1$, where $p=q=\frac12$ is set in the proof of Lemma \ref{thm-1}.  Following the exact same steps, the function in \eqref{eq:function} now reads
  	\begin{equation}
  		f(\kappa_1)=\frac {\lambda_N} {\kappa_1} \frac{p}{q}\big(\exp(p^{-1}\kappa_1\zeta\beta)-1\big),\quad \kappa_1>0,
  	\end{equation}
  	for any $p$ and $q$ fixed.
   Letting $\kappa_1\searrow 0$ yields that $f(\kappa_1)$ tends to $q^{-1}\lambda _N\zeta\beta$. It seems the $p$ cancels out and  essentially plays no  role. We conclude that\begin{equation}\label{eq:restriction}\alpha>\lambda_N \zeta \beta\end{equation}is a sufficient condition  after finding appropriate values for $\kappa_1$ as $q\nearrow 1$.
  \end{remark}

As a little side note, observe that  we are not able to prove Lemma \ref{thm-1} by means of a time-change argument for $L$ in (HReg). Indeed, in  \cite[Thm. 2]{kallsen2002time} it is shown that a time-change theorem exists
 for (symmetric) $\alpha$-stable L\'evy processes, and that it cannot be extended to any other
class of L\'evy processes \cite[Thm. 4]{kallsen2002time}. We will not need it here, but one could revisit the proof of Lemma \ref{A12} and try to show a similar result  for L\'evy processes that are $\alpha$-stable.

Lemma \ref{lemma-2} extends to regulated Lévy processes without additional constraints. We would like to point out that  an exponential tail estimate like \eqref{2exp} can be achieved by means of the BDG-inequality as written  
in \cite[Thm. 4.19]{kuhn2023maximal}. Now having a maximal jump height $\zeta$ turns out useful, because this implies  $\sup_{t\geq 0}|\Delta L(t)|^{p}\leq \zeta^p$, where $\Delta L(t)=L(t)-L(t-)$ denotes the jumps of $L$. The parameter $\zeta$ would turn up in the denominator of the exponential tail estimate. Conform to  the continuous case, we infer the bound  from the elementary inequality in \cite[Lem. B.1]{van2025multidimensional}; for general \cadlag martingales, only asymptotic $\mathcal O(p)$-dependence holds for the upper universal constant\,$\sqrt[\leftroot{-2}\uproot{2}\scalebox{.7}{$p$}]{C_p}$, as can be seen in, e.g., \cite[Thm. 1.9.7]{liptser2012theory} and \cite{pathwiseBDG2020}.

We  will now prove an exponential tail estimate without invoking the BDG-inequality.




  \begin{lemma}\label{thm-2}
  	Let $Y=(Y(t))_{t\geq 0}$ be given by \eqref{eq:Y(t)-1} with $L=(L(t))_{t\geq 0} $ of class \textnormal{(HReg)} with jumps uniformly bounded by some $\zeta\geq 0.$  Assume there exists a constant $\beta>0$ such that
  	\begin{equation}
  		b(s)^2\leq \beta^2\quad \mathbb P\textnormal{-a.s.}\text{  for all $s\geq 0$}.
  	\end{equation}
  	Let $t_0\geq 0$ and $T>0$ be fixed. Also, take $\kappa_2>0$ to be an arbitrary value. Then there exists a constant $R_0\geq 0$ such that for every $R\geq 0$ we have
  		\begin{equation}\begin{aligned}
  		\mathbb P&\left(\sup_{t_0\leq t\leq t_0+T}\dint {(t_0,t]}{}{b(s)}L(s)\geq R\right)\\&\qquad\qquad\qquad\qquad\leq 2\exp\left(-\frac{R^2}{64\beta^2\sigma^2T}\right)+ C\exp\big(-\kappa_2 R\big)+\mathbf 1_{\{R< R_0\}},\label{eq:upperbnd}
  	\end{aligned}\end{equation}
  	where $C=C(\lambda_N,T,\zeta,\beta,\kappa_2)\geq 1$ is a constant not depending on $R$ and $t_0$, and $\lambda_N\geq 0$  the rate of the   Poisson process $N$ associated to $L$. If $L$ is of class \textnormal{(HRegM)}, then we may set $R_0=0.$
  \end{lemma}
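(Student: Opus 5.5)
Under (HReg) with (P1), $L$ decomposes as $L(t)=\sigma W(t)+Z(t)-\lambda_N\mathbb E Z_1\,t$. Under (HRegM) the compensator term vanishes, so $L=\sigma W+Z$ with $\mathbb EZ_1=0$ or $\lambda_N=0$. I would split the integral into a Brownian part, a jump part and a drift part,
\begin{equation*}
\int_{(t_0,t]} b(s)\,\mathrm dL(s)=\sigma\int_{t_0}^t b(s)\,\mathrm dW(s)+\int_{(t_0,t]}b(s)\,\mathrm dZ(s)-\lambda_N\mathbb EZ_1\int_{t_0}^t b(s)\,\mathrm ds,
\end{equation*}
and estimate
\begin{equation*}
\mathbb P\Bigl(\sup_{t}\int_{(t_0,t]} b\,\mathrm dL\geq R\Bigr)\leq \mathbb P\Bigl(\sup_t \sigma\!\int_{t_0}^t b\,\mathrm dW\geq \tfrac R2\Bigr)+\mathbb P\Bigl(\sup_t\int_{(t_0,t]}b\,\mathrm dZ-\lambda_N\mathbb EZ_1\!\int_{t_0}^t b\,\mathrm ds\geq \tfrac R2\Bigr).
\end{equation*}

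\textbf{Brownian part.} Apply Lemma~\ref{lemma-2} with $\beta$ replaced by $\sigma\beta$ and $R$ by $R/2$. This gives exactly $2\exp\bigl(-R^2/(64\beta^2\sigma^2T)\bigr)$. If $\sigma=0$ the term is simply absent.

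\textbf{Jump part.} The compensator drift is deterministic in size: $|\lambda_N\mathbb EZ_1\int_{t_0}^t b\,\mathrm ds|\leq \lambda_N\zeta\beta T$. Set $R_0=4\lambda_N\zeta\beta T$. For $R\geq R_0$ the drift is at most $R/4$, and for $R<R_0$ the indicator $\mathbf 1_{\{R<R_0\}}$ already makes the right-hand side at least $1$. In the martingale case (HRegM) the drift term vanishes, so $R_0=0$ works.

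It then remains to bound $\mathbb P\bigl(\sup_t\int_{(t_0,t]}b\,\mathrm dZ\geq R/4\bigr)$. As in the proof of Lemma~\ref{thm-1}, I would bound this pathwise by
\begin{equation*}
\int_{(t_0,t_0+T]}|b|\,\mathrm d|Z|\leq \beta\zeta\bigl(N(t_0+T)-N(t_0)\bigr),
\end{equation*}
where the increment $N(t_0+T)-N(t_0)$ is Poisson$(\lambda_NT)$ by stationarity of increments; this is why nothing depends on $t_0$. The probability is therefore at most $\mathbb P\bigl(N(T)\geq R/(4\zeta\beta)\bigr)$. By Markov's inequality with $x\mapsto e^{cx}$ and $c=4\zeta\beta\kappa_2$, this is at most
\begin{equation*}
e^{-\kappa_2R}\exp\bigl(\lambda_NT(e^{4\zeta\beta\kappa_2}-1)\bigr).
\end{equation*}
This gives $C=\exp\bigl(\lambda_NT(e^{4\zeta\beta\kappa_2}-1)\bigr)\geq1$, which does not depend on $R$ or $t_0$.

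\textbf{Main obstacle.} The delicate point is the measurability and pathwise justification: that $\sup_t\int_{(t_0,t]} b\,\mathrm dZ$ is dominated by the total-variation integral for \caglad integrands. This is the same step used in Lemma~\ref{thm-1}, so I would rely on the finite-variation, finitely-many-jumps structure of $Z$, with $|\Delta Z|\leq\zeta$ by (P2). Any constant bookkeeping needed for $\kappa_2$ arbitrary can be absorbed into $C$. The degenerate cases $\zeta=0$ or $\lambda_N=0$ make the jump term vanish, and then $C=1$ trivially.
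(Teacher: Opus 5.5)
Your proof is correct, but it handles the jump part differently from the paper.

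\textbf{How the two routes differ.} The paper compensates $Z$, writing $M(t)=Z(t)-\lambda_N\mathbb EZ_1t$; this compensation is the source of the indicator $\mathbf 1_{\{R<R_0\}}$. It then treats $X(t)=\int_{(t_0,t]}b(s)\,\mathrm dM(s)$ as a square integrable martingale and applies Doob's maximal inequality with the convex function $x\mapsto e^{4\kappa_2 x}$, which reduces the supremum to the terminal value. Only after that does it bound $|X(t_0+T)|$ by the total-variation quantity $\zeta\beta N^{t_0}(T)+\lambda_N\zeta\beta T$. The result is $C=\exp(4\kappa_2\lambda_N\zeta\beta T)\exp\bigl(\lambda_NT(e^{4\kappa_2\zeta\beta}-1)\bigr)$.

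You instead dominate the supremum itself pathwise by $\beta\zeta\,(N(t_0+T)-N(t_0))$, exactly as the paper does in Lemma~\ref{thm-1}. This needs no martingale property and no maximal inequality, and it gives a slightly smaller constant, without the factor $\exp(4\kappa_2\lambda_N\zeta\beta T)$. The paper's argument ends with the same crude total-variation bound anyway, so its Doob step buys nothing here. It would only pay off if that bound were replaced by a genuine exponential-martingale estimate.

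\textbf{A correction to your decomposition.} Under the paper's convention (P1) together with \eqref{eq:not_canon}, one has $L=\sigma W+Z$, where $Z$ is the \emph{uncompensated} compound Poisson process. So the drift term $-\lambda_N\mathbb EZ_1t$ in your decomposition is not actually present, and your claim that this term vanishes precisely under (HRegM) is also off. This does not damage your argument: simply drop the term and split $R$ as $R/2+R/2$. Your pathwise route then gives the estimate with $R_0=0$ for every $L$ of class (HReg), not only under (HRegM). The paper's $R_0$ is an artefact of compensating $Z$.
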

  \begin{proof}
  As before, consider the inequality\begin{equation}\begin{aligned}
  		\mathbb P\left(\sup_{t_0\leq t\leq t_0+T}\dint {(t_0,t]}{}{b(s)}L(s)\geq R\right)&\leq \mathbb P\left(\sup_{t_0\leq t\leq t_0+T}\int_{t_0}^tb(s)\sigma\, \mathrm dW(s)\geq \frac R2\right) \\&\quad\quad\quad\quad + \mathbb P\left(\sup_{t_0\leq t\leq t_0+T}\dint {(t_0,t]}{}{b(s)} Z(s)\geq \frac R2\right).
  	\end{aligned}\end{equation}
  	Appealing to Lemma \ref{lemma-2} with  $R$ and $\beta$ replaced by   $R/2$ and $\beta\sigma$, respectively,  gives us
  	\begin{equation}
  		\mathbb P\left(\sup_{t_0\leq t\leq t_0+T}\int_{t_0}^tb(s)\sigma \,\mathrm dW(s)\geq \frac R2\right)\leq 2\exp\left(-\frac{R^2}{64\beta^2\sigma^2T}\right).
  	\end{equation}
  	Let $M=(M(t))_{t\geq 0}$ be the martingale part of $Z=(Z(t))_{t\geq 0}$, i.e.,\begin{equation}M(t)=Z(t)-\lambda_N\mathbb EZ_1t,\quad t\geq 0.\end{equation} We obtain the inequality
   \begin{equation}\begin{aligned}
  		\mathbb P\left(\sup_{t_0\leq t\leq t_0+T}\dint {(t_0,t]}{}{b(s)}Z(s)\geq \frac R2\right)&\leq \mathbb P\left(\sup_{t_0\leq t\leq t_0+T}\int_{t_0}^tb(s)\, \mathrm dM(s)\geq \frac R4\right) +\mathbf 1_{\{R<R_0\}},
  	\end{aligned}\end{equation}
   for some $R_0\geq 0$ sufficiently large. Note that this splitting is not needed when $\mathbb EZ_1=0$, thus when $Z$ is a martingale. In that case, this allow us to disregard the indicator function, or differently put, we then may set $R_0=0.$ 
   
Subsequently, introduce the process $X=(X(t))_{t\geq 0}$, defined by
  	\begin{equation}
  		X(t)=\begin{cases}\label{eq:X}
  			\displaystyle \dint{(t_0,t]}{}{b(s)}M(s),&t>t_0,\\
  			0,&0\leq t\leq t_0,
  		\end{cases} 
  	\end{equation}
  and   observe   that $X$ is a square integrable martingale. Indeed, we have
  \begin{equation}
  \begin{aligned}
  	\mathbb E[X(t)|\mathcal F_s]&=\mathbb E\left[\dint0t{b(s)}M(s)\middle|\mathcal F_s\right]-\mathbb E\left[\dint0{t_0}{b(s)}M(s)\middle|\mathcal F_s\right]\\&= \dint0{ s}{b(s)}M(s)-\dint0{t_0\wedge s}{b(s)}M(s)
  	\\&= X(s),
  \end{aligned}\end{equation}
for  $t\geq s\geq 0$. Doob's supremal inequality (combine \cite[Prop. 1.3.6]{book:karatzas} and \cite[Thm. 1.3.8]{book:karatzas}) yields
  \begin{equation}
  	\mathbb	P\left(\sup_{t_0\leq t\leq t_0+T}X(t)\geq \frac R4\right)   \leq \exp(-\kappa_2 R)\mathbb E\exp\left(4\kappa_2 \left|\dint{t_0}{t_0+T}{b(s)}M(s)\right|\right),
  \end{equation}
where we have taken the   convex function $x\mapsto \exp(4\kappa_2 x)$. Next, observe that
\begin{equation}
	\dint{t_0}{t_0+T}{b(s)}M(s)=\dint{0}{T}{b(s+t_0)}\bar M^{t_0}(s)
\end{equation}
  holds, with $\bar M^{t_0}=(\bar M^{t_0}(t))_{t\geq 0}$ being the process defined by $\bar M^{t_0}(t)=M({t_0+t})-M({t_0}),$  $t
  \geq 0.$ In particular, we have  that $M$ and $\bar M^{t_0}$ coincide in law due to the strong Markov property \cite[Thm. I.32]{book:protter}.
  Let $N^{t_0}=(N^{t_0}(t))_{t\geq 0}$ be the  Poisson process associated to $\bar M^{t_0}$. 
This gives us
  	\begin{equation}
  	\left|\dint0T{b(u+t_0)}\bar M^{t_0}(u)\right|\leq  \dint0T{|b(u+t_0)|} |\bar M^{t_0}|(u)\leq \zeta \beta N^{t_0}(T)+\lambda_N\zeta\beta T,
  \end{equation}
holding $\mathbb P$-almost surely, hence
  	\begin{equation}\begin{aligned}
  		\mathbb	P\left(\sup_{t_0\leq t\leq t_0+T}\int_{t_0}^tb(s)\,\mathrm d M(s)\geq \frac R4\right) 
  		&\leq \exp(-\kappa_2 R) \exp\big(4\kappa_2\lambda_N\zeta\beta T\big)\mathbb E\exp\big(4\kappa_2 \zeta \beta N^{t_0}(T)\big)\\
    &=C(\lambda,T,\zeta,\beta,\kappa_2)\exp(-\kappa_2R),
  	\end{aligned}\end{equation}
where  the constant $C=C(\lambda,T,\zeta,\beta,\kappa_2)=\exp(4\kappa_2\lambda_N\zeta\beta T)\exp\left(\lambda_N T(\exp({4\kappa_2 \zeta\beta})-1)\right)\geq 1$  indeed does  not  depend on the parameters   $R$ and $t_0$. 
  \end{proof}

  \begin{proof}[Proof of Proposition \ref{prop:probabove}] 
 Let $R\geq 0$ and   $t \geq 0 $ be arbitrary. Consider $l$ to be the greatest integer below $t,$ i.e., we want $l$ to satisfy the condition $\lfloor t\rfloor -1<l\leq \lfloor t \rfloor$. Completely analogous to the proof of \cite[Cor. 3.6]{BosGaaVer1-24} in the case of Brownian noise, we find
 \begin{equation}
  	\begin{aligned}
  		\mathbb{P}\left(\sup _{\theta \in[0, t]} \big(Y(t)-Y(\theta) \big)\geq R\right) &
  		\leq \mathbb{P}\left(\sup _{\theta \in[0, l]} -\int_{\theta}^{l} a(s) \,\mathrm d s+\int_{(\theta,l]}^{ } b(s)\,\mathrm d L(s)  \geq \frac R  3\right) \\
  		&\quad\quad\quad+\mathbb{P}\left( \sup _{\theta \in[t-1, t]}\dint{(t-1,\theta]}{} {b(s)}	L(s)\geq \frac R  3\right) \\
  		&\quad\quad\quad\quad\quad\quad+\mathbb{P}\left( \sup _{\theta \in[t-1, t]}\dint{(t-1,\theta]}{} {\big(-b(s)\big)}	L(s) \geq \frac R  3\right).
  	\end{aligned}
   \end{equation}
  	The latter is below a given $\varepsilon>0$, for any   $t \geq 0,$ provided   $R$ is   big enough. 
Thus, the result we want to prove is a  direct consequence of Lemma \ref{thm-1}, together with Remark \ref{remark:sharp},    and Lemma \ref{thm-2}.
  \end{proof}

\subsection{Admissible classes of integrands}\label{sec:integrands}

On a historical note, 
  lots of literature, e.g., \cite{book:kallenberg,book:karatzas,book:revuz}, rather work with progressively measurable integrands because progressive processes satisfy nice properties. For this class of integrands we have that the Lebesgue--Stieltjes integral in \eqref{eq:Y(t)} is again adapted. 
 Nonetheless,  if a stochastic 
process $(c(t))_{t\geq 0}$ is measurable and adapted, then  a progressively measurable modification, say $(\hat c(t))_{t\geq 0}$, exists \cite[p. 68]{book:meyer}. This then implies for measurable adapted processes  that the first integral in \eqref{eq:Y(t)} is indistinguishable from an adapted process, hence  adapted itself since the filtered probability space satisfies the usual conditions \cite[Lem. 3.11]{book:chung}.

Furthermore, as   briefly pointed out in \cite[Sec. 1.5]{book:mao}, a progressively measurable process $(\hat c(t))_{t\geq 0}$ enables us to construct a predictable process $(\bar c(t))_{t\geq 0}$, namely
\begin{equation}
    \bar c(t)=\limsup_{h\downarrow 0}\frac{1}{h}\int_{t-h}^t\hat c(s)\,\mathrm ds.
\end{equation}
This is then a predictable modification of $(\hat c(t))_{t\geq 0}$ assuming the integrand is in $L^1,$ i.e., the first integrability condition in \eqref{A.2} is satisfied for the process $(\hat c(t))_{t\geq 0}$, which follows from Lebesgue's differentiation theorem \cite[Thm. 5.6.2]{bogachev2007measure}. 
In particular, this implies  that $ (\mathrm ds\times\mathbb P)((t,\omega)\in[0,\infty)\times\Omega:c(t,\omega)\neq \bar c(t,\omega))=0$. Consequently, we may replace the measurable and adapted processes $(a(t))_{t \geq 0}$ and $(b(t))_{t \geq 0}$ in {\S}\ref{sec:BMc} by  either progressively or predictably measurable stochastic processes without loss of generality, as the corresponding It\^o-processes with negative drift are indistinguishable from one another. 

The class of admissible integrands in {\S}\ref{sec:ext} can actually be expanded. The results in this section  hold true for $(a(t))_{t \geq 0}$ and $(b(t))_{t \geq 0}$  measurable and adapted processes such that
\begin{equation}  \dint {0} {t}{|a(s)|}   s<\infty   \quad \mathbb P\text{-a.s.} \label{eq:sufficient}  \quad\text{and}\quad \mathbb{E} \dint {0} {t}{|b(s)|^{2}}  s<\infty, \quad t\geq 0.  \end{equation}
We will not need it in this paper, but it is worth mentioning here. We can assume without loss that  $L$  is of class (HRegM) for the explanation below.
Following \cite[Ch. IV]{book:protter}, the integrand $(b(t))_{t \geq 0}$ can be  a  predictable process because of the following identity:
 \begin{equation}
      \mathbb E\int_0^t|b(s)|^2\mathrm d[L](s)=\mathbb E\int_0^t|b(s)|^2\mathrm d\langle L\rangle (s)=\lambda \mathbb E\int_0^t|b(s)|^2\mathrm ds.\label{eq:calcul}
  \end{equation}
Recall, the first term in \eqref{eq:Y(t)} can  be interpreted as a stochastic integral with respect to a semimartingale if $(a(t))_{t \geq 0}$ is in $\mathbb L[0,\infty)$. In case $(a(t))_{t \geq 0}$ is assumed to be predictable,   a stronger integrability assumption is required and  it then still coincides with the Lebesgue--Stieltjes interpretation.
   Even more is true. Since (HRegM)\,$\subseteq$\,(HDol)\footnote{See \cite[App. A]{artikel1-general} for the definition of (HDol).}, one can follow \cite{book:chung,unpublished:timo} to have the process
   \begin{equation}
  	\left(\dint{0}{t}{b(s)}L(s)\right)_{t\geq 0} \label{eq:stoc_int}
  \end{equation}
be well-defined  as a square integrable martingale for measurable and adapted processes $(b(t))_{t\geq 0}$.
As before, and according to \cite[{p. 66}]{book:chung} and \cite[p. 190]{unpublished:timo}, we have that for any measurable and adapted processes $(b(t))_{t\geq 0}$ there is a predictable process  $(\bar b(s))_{t\geq 0}$ such that   $ (\mathrm ds\times\mathbb P)((t,\omega)\in[0,\infty)\times\Omega:b(t,\omega)\neq \bar b(t,\omega))=0$. Finally,  $\mu_M\ll \mathrm ds\times \mathbb P$ yields $ \mu_M((t,\omega)\in[0,\infty)\times\Omega:b(t,\omega)\neq \bar b(t,\omega))=0$, which allows for a martingale extension of the stochastic integral.  

    An important warning! For adapted and measurable integrands, we note that a stochastic integral 
    for which the integrator is also of finite variation  may no longer coincide with the Lebesgue--Stieltjes integral. For example, consider the martingale $\smash[b]{M(t)=N(t)-\lambda_N t}$, $t\geq 0,$ where 
    $\smash[b]{(N(t))_{t\geq 0}}$ is a Poisson process with intensity $\lambda_N.$ Observe that $\smash[b]{\int_0^tN(s)\,\mathrm dM(s)}$, interpreted  as Lebesgue--Stieltjes integral, equals $ \int_0^tN(s-)\,\mathrm dM(s)+N(t), $
      while it  coincides with   $\int_0^tN(s-)\,\mathrm dM(s)$ when it is being interpreted as a stochastic integral. To solve this remedy, one should either restrict to the class of predictable integrands or be very  clear about how   the second term in \eqref{eq:Y(t)}, i.e., stochastic process \eqref{eq:stoc_int}, should be interpreted. We refer to \cite{Bosch24} for a more elaborate discussion. 


\section{Delay equations with stochastic negative feedback}\label{Sec6}
This section is dedicated to proving Theorem \ref{main-thrm}. In particular,  the results  in this section  allow for a natural extension of the latter to L\'evy processes $L=(L(t))_{t\geq 0}$ of type (HReg), as discussed in the introduction. Our approach involves analysing the transformed equation
  \begin{equation}
  \mathrm d Y(t)=\left[-\gamma(t)+r(t)e^{-Y(t)}f\big(e^{Y(t-1)}\big)\right] \mathrm d t+a\left(Y_{t},t\right) \mathrm d t+b\left(Y_{t-},t-\right) \mathrm d L(t),\quad Y_0=\Phi,\label{eq:DESNF}
  \end{equation}
  with $a,b:D[-1,0]\times \mathbb R\to\mathbb R$ being time-proper locally Lipschitz functionals, $\Phi\in \mathbb D[-1,0]$, and the reproduction rate $r:[0,\infty)\to[0,\infty)$ and  the mortality rate $\gamma:[0,\infty)\to[0,\infty)$ are considered \cadlag and positive. If  $L$ is  a Brownian motion, it suffices to assume measurablity of the rates $r(t)$ and $\gamma(t)$, and the rates  will be   set to constant when we search for (non-trivial)  invariant measures. 
Recall, we have seen in {\S}\ref{sec:new:3} that for existence of an invariant measure, hence that of a stationary distribution and  solution, it suffices to show global existence of all solutions as well as boundedness in probability of at least one solution, i.e., for some initial process $\Phi$. 

The following  method is in line with the proof strategy of  the companion paper on the stochastic Wright's equation \cite{BosGaaVer1-24}. The  idea is to  keep  track of the  trajectories of stochastic solutions to \eqref{eq:DESNF} in a pathwise manner, which is the subject of {\S}\ref{sec:pathwise}. Next, in {\S}\ref{sec:global} we show that the just obtained  solution estimates lead to  global existence and that actually all solutions are bounded above in probability; the latter follows from utilising  the  estimates for It\^o- and Lévy-driven processes  with negative drift in the previous section.
The value  of $f(0)=\lim_{x\searrow 0}f(x)$  plays an intricate role with regard to having solutions  bounded below in probability. (Note that the value of $f(x)$, for $x< 0$, has no effect on the problem at all.) In this paper, we focus on the case $f(0)>0$ and show that all solutions to \eqref{eq:DESNF} are also bounded below in probability. Various phenomena can occur in the case  $f(0)=0$ and requires a more in-depth investigation.

 \subsection{Solution estimates of integral delay equations with negative feedback and deterministic forcing}\label{sec:pathwise}
In this section, we provide an upper  bound on the solution for any function $f$ that is bounded from above, together with a lower  bound on the solution in  case  $f(0)>0$ is satisfied. Before we proceed, let us provide a deterministic version of what is  meant by being regulated,  in line with  class (HReg). 
 

  \begin{definition}
  	A \cadlag function $v:E\subset \R \to\R$ is called  \texttt{regulated} whenever the amount of discontinuities on compact intervals is finite\footnote{This  finite intensity property   in the definition of   a regulated function is, as a matter of fact, not necessary for any result in this section; all proofs simply do not exploit this property. The condition is  therefore superfluous, but we keep it as it is   in line with our   ``regulated Lévy process'' terminology.} and   the corresponding jumps are uniformly bounded by some constant $\zeta\geq 0$, i.e., $|v(t-)-v(t)|\leq \zeta$ for all $t\in E$. 
  \end{definition}


\begin{lemma}\label{prop:MG}
Suppose  $f:\mathbb R\to \mathbb R$ is  continuous, non-negative on $(0,\infty)$, and bounded from above by some constant $M> 0$.
In addition, let $ \tilde \gamma:=\inf_{t\geq 0}\gamma(t)>0,$ $     \tilde r:=\sup_{t\geq 0} r(t)<\infty, $ and $ t_{0} \geq 0$. Assume that   $z:[t_0-1, \infty) \rightarrow \mathbb{R}$ is \cadlag and satisfies the integral equation
\begin{equation}\label{eq:integraleq}
z(t)=z (t_{0} )+\int_{t_{0}}^{t}\left[-\gamma(\theta)+ r(\theta)e^{-z(\theta)}f\big(e^{z(\theta-1)}\big) \right]\mathrm  d \theta  +v(t)-v (t_{0} ),\quad t\geq t_0,
\end{equation}
where $v:[t_0, \infty) \rightarrow \mathbb{R}$ is   regulated  with maximal jump height $\zeta\geq 0$. Let $R\geq 0$ be such that
\begin{equation}
z(t_0)<R.
\end{equation}
Then for every $t \geq t_0$ there exists  an   $a^{t} \in[t_0, t]$ such that  the solution $z(t)$  satisfies
\begin{equation}
z(t) \leq \max \left\{R,R+\zeta
-\alpha (t-a^{t} )
+v(t)-v (a^{t} )\right\},\label{eq:final}
\end{equation}
where  $\alpha =\tilde \gamma-\tilde r Me^{-R}.$
\end{lemma}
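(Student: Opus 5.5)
The plan is a deterministic ``last entrance time'' argument. The key observation is that whenever $z(\theta)\geq R$, the drift is at most $-\alpha$: indeed $r(\theta)e^{-z(\theta)}f(e^{z(\theta-1)})\leq \tilde r M e^{-R}$ because $f\leq M$. Here I use only that $f$ is bounded above; for $z(\theta-1)$ real, $e^{z(\theta-1)}>0$, so non-negativity of $f$ on $(0,\infty)$ is not even needed for the upper bound. Hence
\begin{equation*}
-\gamma(\theta)+r(\theta)e^{-z(\theta)}f\big(e^{z(\theta-1)}\big)\leq -\tilde\gamma+\tilde rMe^{-R}=-\alpha \quad\text{whenever } z(\theta)\geq R .
\end{equation*}

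Fix $t\geq t_0$. If $z(t)\leq R$ there is nothing to prove; take any $a^t$, say $a^t=t$, for which the second entry equals $R+\zeta$. So assume $z(t)>R$. Define
\begin{equation*}
a^t:=\sup\{s\in[t_0,t]: z(s)<R\}.
\end{equation*}
This set is nonempty because $z(t_0)<R$, so $a^t\in[t_0,t]$. By construction $z(\theta)\geq R$ for all $\theta\in(a^t,t]$. Subtracting the integral equation at $a^t$ from that at $t$ gives
\begin{equation*}
z(t)=z(a^t)+\int_{a^t}^t[\cdots]\,\mathrm d\theta+v(t)-v(a^t)\leq z(a^t)-\alpha(t-a^t)+v(t)-v(a^t).
\end{equation*}
It therefore remains to show $z(a^t)\leq R+\zeta$.

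This step, the value of $z$ at the entrance time, is where the càdlàg and jump structure must be handled carefully, and I expect it to be the main obstacle. There are two cases.

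\textbf{Case 1: $z(a^t)<R$.} The bound holds trivially.

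\textbf{Case 2: $z(a^t)\geq R$.} Then $a^t>t_0$. By the definition of the supremum there are points $s_n\uparrow a^t$ (or $s_n=a^t$) with $z(s_n)<R$, so $z(a^t-)\leq R$. Now $z$ is an integral term, continuous in $t$, plus $v$. Hence the jumps of $z$ coincide with those of $v$, so $|z(a^t)-z(a^t-)|\leq\zeta$, and thus $z(a^t)\leq R+\zeta$.

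One should double-check the degenerate sub-case where the sup is attained with $z<R$ immediately to the left. Right-continuity also shows the set $\{z\geq R\}$ near $a^t$ behaves as claimed. Combining the cases yields $z(t)\leq R+\zeta-\alpha(t-a^t)+v(t)-v(a^t)$, which gives the claimed maximum bound \eqref{eq:final}. Note that $\alpha$ may be negative; the argument does not use its sign.
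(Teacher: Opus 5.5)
Your proposal is correct and follows the paper's proof essentially verbatim: the same last-entrance time $a^t=\sup\{s\in[t_0,t]:z(s)<R\}$, the bound $z(a^t)\le z(a^t-)+\zeta\le R+\zeta$ (only $v$ produces jumps, and $a^t>t_0$), and the drift bound $-\alpha$ on $[a^t,t]$. Your Case~1 cannot actually occur, since for $z(t)>R$ right-continuity forces $z(a^t)\ge R$ (which the paper simply asserts), but treating it separately does no harm.
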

\begin{proof}

	Fix $t \in[t_0, \infty) $ and note that either $z(t) < R$ or $z(t)\geq R $ holds. In case of the latter, define
\begin{equation}\label{eq:randomv}
a^{t} :=\sup \{s \in[t_0, t]: z(s)<R\}.
\end{equation}
As we have $z(t_0)<R$  by assumption, we deduce that the supremum  is taken over a non-empty set, hence the supremum is finite and $a^t\in[t_0,t]$.  
  Further, we have $z(a^t)\geq R$, $z(a^t-)\leq R$, and
  \begin{equation}
  	z(a^t)\leq z(a^t-)+\zeta \leq R+\zeta.
  \end{equation}
   This is  because the function $z(t)$ is  regulated too,  with jumps uniformly bounded by again $\zeta$, since only  $v(t)$ causes   discontinuities in $z(t)$. Note that this  is immediate from the fact that  the integral part in \eqref{eq:integraleq}  is continuous in $t$.
   
  Moreover, we have $z(\theta) \geq R$   for all $\theta \in\left[a^{t}, t\right].$ This yields
\begin{equation}
\begin{aligned}
	z(t)&=  z (a^{t} )+\int_{a^{t}}^{t}\left[-\gamma(\theta)+r(\theta)e^{-z(\theta)}  f\big(e^{z(\theta-1)}\big) \right]\mathrm  d   \theta 
	 +v(t)-v (a^{t} ) \\
	&\leq  R+\zeta+\int_{a^{t}}^{t} (-\tilde \gamma +\tilde r M e^{-R})\,  \mathrm  d \theta +v(t)-v (a^{t} ) \\
	&=  R+\zeta- (\tilde \gamma -\tilde r M  e^{-R}) (t-a^{t} )+v(t)-v (a^{t} ).
\end{aligned}
\end{equation}
In conclusion, the  solution   indeed satisfies \eqref{eq:final} as the time $t$ was chosen arbitrarily. 
\end{proof}

The following lemma will be used to obtain a lower bound on the solution to \eqref{eq:integraleq}. This lemma generalises \cite[Lem. 4.1]{BosGaaVer1-24}, as it allows for jump occurrences to happen, and may also be invoked to study the stochastic Wright's equation driven by a regulated L\'evy process.


  \begin{lemma}
  	Let $\beta\geq0$, $\zeta\geq 0$, \label{thm:q=0} and suppose $F:(0,\infty)\times (0,\infty)\to \R$   is continuous,
  	satisfying the assumption that there exist  constants $\delta>0$
  	and $C_F\geq 0$ such that
  	\begin{equation} 
  	\label{eq:q=0-condit1}	F(x,y)\geq  \beta\quad\text{for all } x,y\in(0,\delta),
  	\end{equation} 
  	and
  	\begin{equation}
  		F(x,y)\geq -C_F\quad\text{for all }x,y\in(0,\infty).
  	\end{equation}
  	Let $t_0\geq 0$.  Assume  that $z:[t_0-1,\infty)\to\R$ is \cadlag and satisfies 
  	the integral equation
  	\begin{equation}
  		z(t)=z(t_0)+\int_{t_0}^t F\big(e^{z(\theta-1)},e^{z(\theta)}\big)\,\mathrm d\theta-\beta(t-t_0)+v(t)-v(t_0),\quad t\geq t_0,
  	\end{equation}
where $v:[t_0, \infty) \rightarrow \mathbb{R}$ is   regulated  with maximal jump height $\zeta\geq 0$. Let $R\geq 0$ be such that
  	\begin{equation}
  		e^{-R}<\delta\quad\text{and}\quad z(t_0)>-R.
  	\end{equation}
  	Then for every $t \geq t_0$ there exists a time $a^{t} \in[t_0, t]$ such that  the solution $z(t)$  satisfies
  	\begin{equation}
  		\label{eq:lowbound1}	z(t)\geq\min\{-R, -R-C_F-\beta-\zeta+v(t)-v(a^t)\}.
  	\end{equation}
  \end{lemma}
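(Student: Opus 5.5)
We mirror the proof of Lemma \ref{prop:MG}, now working with the lower threshold $-R$ instead of the upper threshold $R$. Fix $t\geq t_0$. If $z(t)\geq -R$ there is nothing to prove: take $a^t=t$ and \eqref{eq:lowbound1} holds trivially. Otherwise $z(t)<-R$, and we define
\[
a^t:=\sup\{s\in[t_0,t]: z(s)\geq -R\}.
\]
The set is non-empty because $z(t_0)>-R$, so $a^t\in[t_0,t]$. By definition $z(\theta)<-R$ for all $\theta\in(a^t,t]$.

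\textbf{Behaviour at $a^t$.} Since the integral term is continuous in $t$, all jumps of $z$ come from $v$, so they are bounded by $\zeta$. From the definition of the supremum and the càdlàg property we get $z(a^t-)\geq -R$ (when $a^t>t_0$; at $a^t=t_0$ use $z(t_0)>-R$). Hence
\[
z(a^t)\geq z(a^t-)-\zeta\geq -R-\zeta .
\]
We do not know whether $z(a^t)\geq -R$, since a downward jump may occur exactly at $a^t$, and the bound above covers this case.

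\textbf{The key step: comparing the integral with the drift $-\beta$.} On $(a^t,t]$ we have $e^{z(\theta)}<e^{-R}<\delta$. The delayed argument $e^{z(\theta-1)}$, however, is only controlled when $\theta-1\in(a^t,t]$, i.e.\ when $\theta\in(a^t+1,t]$. We therefore split the integral. On $[a^t,\min\{a^t+1,t\}]$, an interval of length at most $1$, we use $F\geq -C_F$ and the drift bound $-\beta$, which contributes at least $-C_F-\beta$. On $(a^t+1,t]$ (if nonempty), both arguments of $F$ lie in $(0,\delta)$, so $F(\cdot,\cdot)-\beta\geq 0$ by \eqref{eq:q=0-condit1}. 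Writing
\[
z(t)=z(a^t)+\int_{a^t}^t\bigl[F\bigl(e^{z(\theta-1)},e^{z(\theta)}\bigr)-\beta\bigr]\,\mathrm d\theta+v(t)-v(a^t)
\]
and combining the pieces gives
\[
z(t)\geq -R-\zeta-C_F-\beta+v(t)-v(a^t),
\]
which is \eqref{eq:lowbound1}.

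\textbf{Main obstacle.} The delicate points are the one-time-unit window where the delayed term is uncontrolled, handled by the crude bound $-C_F-\beta$, and the jump at $a^t$. The latter requires care in showing $z(a^t-)\geq -R$ from the supremum definition, using either left limits from points with $z\geq -R$ or the value at $a^t$ itself.
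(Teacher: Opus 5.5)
Your proof is correct and follows the paper's argument: the same last-exit time $a^t$, the bound $z(a^t)\geq -R-\zeta$ from the jump size, and the split of the integral at $a^t+1$. The paper handles $t-a^t<1$ and $t-a^t\geq 1$ as separate cases, which your $\min\{a^t+1,t\}$ merges; and with your non-strict set $\{z(s)\geq -R\}$ the claim $z(a^t-)\geq -R$ can fail when $a^t$ lies in the set but is isolated from the left, though then $z(a^t)\geq -R$ directly, as your final remark notes, whereas the paper's strict set $\{z(s)>-R\}$ together with right-continuity rules this case out.
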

  
  \begin{proof} As before, fix $t\in [t_0,\infty)$, observe that    either  $z(t)>-R$ or $z(t)\leq -R$ holds, and from now one we shall assume the latter scenario.  Again,   we  define
  	\begin{equation}
  		a^t:=\sup\{s\in[t_0,t]:z(s)>-R\},
  	\end{equation}
  note that $a^t\in[t_0,t]$  because $z(t_0)>-R$, and we have
  \begin{equation}
  	z(a^t)\geq z(a^t-)-\zeta\geq -R-\zeta,\label{eq:regulated!}
   \end{equation}
  since the solution $z(t)$ is regulated too with jumps uniformly bounded by the same constant $\zeta.$ 
   If we have $t-a^t<1$, then we can simply conclude
  	\begin{equation}\begin{aligned}
  		  		z(t)&=z(a^t)+\dint{a^t}t{ F\big(e^{z(\theta-1)},e^{z(\theta)}\big)}\theta-\beta(t-a^t)+v(t)-v(a^t)\\
  		  		&\geq -R-\zeta-\dint{a^t}{t}{C_F}\theta-\beta(t-a^t)+v(t)-v(a^t)\\
  		  		&\geq -R-C_F-\beta-\zeta+v(t)-v(a^t).
  		  	\end{aligned}\end{equation}
  	  	On the other hand, if $t-a^t\geq 1$ is the case, then we need to exploit the fact that $z(\eta)\leq -R$ holds  for all $\eta\in[a^t,t]$, and
  	  	so we have
  	  	\begin{equation}
  	  		e^{z(\eta)}\leq e^{-R}<\delta,\quad\text{for all }\eta\in [a^t,t].
  	  	\end{equation} 
  	This gives us
\begin{equation}\begin{aligned}
  		z(t)&=z(a^t)+\dint{a^t}t{ F\big(e^{z(\theta-1)},e^{z(\theta)}\big)}\theta-\beta(t-a^t)+v(t)-v(a^t)\\
  		&=z(a^t)+\dint{a^t}{a^t+1}{ F\big(e^{z(\theta-1)},e^{z(\theta)}\big)}\theta\\
  		&\hspace{3cm}+\dint{a^t+1}t{ F\big(e^{z(\theta-1)},e^{z(\theta)}\big)}\theta-\beta(t-a^t)+v(t)-v(a^t)\\
  		&\geq	
  		z(a^t)-\dint{a^t}{a^t+1}{C_F}\theta+\dint{a^t+1}{t}{\beta}\theta-\beta(t-a^t)+v(t)-v(a^t)\\
  		&= z(a^t)-C_F+\beta(t-a^t-1)-\beta(t-a^t)+v(t)-v(a^t)\\
  		&= z(a^t)-C_F-\beta+v(t)-v(a^t)\\
  		&\geq -R-C_F-\beta-\zeta+v(t)-v(a^{t}), 
  	\end{aligned}\end{equation}
  	where the last inequality follows from equation  \eqref{eq:regulated!}.
  	This completes the proof.
  \end{proof}

\begin{corollary}\label{cor:q=0}
Suppose that $f:\R\to\R$ is  continuous, non-negative on $(0,\infty)$, and       $f(0)>0.$ 	Let $\gamma, r> 0$ and  $ t_0\geq 0$. Assume that $z:[t_0-1,\infty)\to \R$ is \cadlag and satisfies the integral equation
	\begin{equation}\label{eq:integral1}
		z(t)=z(t_0)-\gamma(t-t_0)+\int_{t_0}^t re^{-z(\theta)}f\big(e^{z(\theta-1)}\big)\,\mathrm d\theta+v(t)-v(t_0),\quad t\geq t_0,
	\end{equation}
where $v:[t_0, \infty) \rightarrow \mathbb{R}$ is   regulated  with maximal jump height $\zeta\geq 0$.
	Let $R\geq 0$ be sufficiently large. Then, if $z(t_0)>-R$, we have 
  for every $t \geq t_0$ there exists a time $a^{t} \in[t_0, t]$ such that 
\begin{equation}
 	z(t)\geq \min\{-R, -R-\gamma-\zeta+v(t)-v(a^t)\}.
\end{equation}
\end{corollary}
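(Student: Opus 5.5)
The plan is to derive Corollary \ref{cor:q=0} directly from Lemma \ref{thm:q=0}, taking $\beta:=\gamma$ and
\begin{equation*}
F(x,y):=r\,y^{-1}f(x),\qquad x,y\in(0,\infty).
\end{equation*}
With this choice $F\big(e^{z(\theta-1)},e^{z(\theta)}\big)=re^{-z(\theta)}f\big(e^{z(\theta-1)}\big)$, so the integral equation \eqref{eq:integral1} is exactly the equation in Lemma \ref{thm:q=0}. The drift $-\gamma(t-t_0)$ plays the role of $-\beta(t-t_0)$, and $v$ is the same regulated forcing with jump bound $\zeta$. Continuity of $F$ on $(0,\infty)^2$ follows from the continuity of $f$.

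The two hypotheses on $F$ are checked as follows.
\begin{itemize}
\item \emph{Lower bound on all of $(0,\infty)^2$.} Since $f\geq 0$ on $(0,\infty)$ and $r,y>0$, we have $F\geq 0$ there. Hence we may take $C_F=0$.
\item \emph{Condition \eqref{eq:q=0-condit1}.} This is where $f(0)>0$ enters. By continuity of $f$ at $0$, there is $\delta_1\in(0,1)$ such that $f(x)\geq f(0)/2$ for all $x\in(0,\delta_1)$. For $x,y\in(0,\delta)$ with $\delta\leq\delta_1$ this gives
\begin{equation*}
F(x,y)\geq \frac{r f(0)}{2y}> \frac{rf(0)}{2\delta}.
\end{equation*}
Choosing $\delta:=\min\{\delta_1,\ rf(0)/(2\gamma)\}$ ensures $F(x,y)\geq\gamma=\beta$ on $(0,\delta)^2$.
\end{itemize}
The key point is that the factor $e^{-z(\theta)}=1/y$ blows up as $y\searrow0$. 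This makes the reproduction term dominate the constant mortality $\gamma$ near zero, however small $f(0)$ is.

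It remains to choose $R$. "Sufficiently large" will mean $R\geq0$ with $e^{-R}<\delta$, which is exactly the requirement in Lemma \ref{thm:q=0}. Given $z(t_0)>-R$, the lemma yields, for each $t\geq t_0$, some $a^t\in[t_0,t]$ with
\begin{equation*}
z(t)\geq\min\{-R,\,-R-C_F-\beta-\zeta+v(t)-v(a^t)\}.
\end{equation*}
Substituting $C_F=0$ and $\beta=\gamma$ gives precisely the claimed bound.

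No step is a genuine obstacle. The only mild subtlety is a domain issue: $F$ must be defined on $(0,\infty)^2$, yet $f$ is only assumed non-negative on $(0,\infty)$. This causes no problem, because $F$ is only ever evaluated at exponentials, which are positive.
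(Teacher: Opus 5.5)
Your proposal is correct and follows the paper's route: apply Lemma~\ref{thm:q=0} with $\beta=\gamma$, $C_F=0$ and $F(x,y)=r\,y^{-1}f(x)$, using continuity of $f$ at $0$ and the blow-up of $y^{-1}$ to get $F\geq\gamma$ on $(0,\delta)^2$. Your version is slightly more careful than the paper's, which writes $F(x,y)=f(x)y^{-1}$ without the factor $r$ and leaves $C_F=0$ implicit.
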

This corollary is an immediate consequence of Lemma \ref{thm:q=0} by taking
	$F(x,y)=f(x)y^{-1}.$
Indeed, for sufficiently small $\varepsilon>0$, we are able to find a $\delta>0$ small enough such that $f(x)\geq \varepsilon$ holds for $x\in(0,\delta)$ and with $y^{-1}\geq \delta^{-1}\geq \gamma/\varepsilon$ for $y\in (0,\delta).$ Combining yields  $F(x,y)\geq \gamma$.
We leave it as an exercise to the reader to state a similar result with $\gamma=\gamma(t)$ and $r=r(t)$ time-dependent.

In the case of $f(0)= 0,$ the problem of finding a lower bound becomes much more delicate. We believe that by writing $f(x)=xg(x)$ with $g(0)>0,$ we can introduce $F(x,y)=xy^{-1}g(x)$ in which we might be able to exploit the full potential of the $y^{-1}$ term.
Currently, generalising Lemma \ref{thm:q=0} and finding effective lower bounds suitable for the approach in  {\S}\ref{sec:global}, hence finding estimates which do not grow logarithmically in time when $v$ is a stochastic process, is still open.

\subsection{Global existence, boundedness in probability, and stationarity}\label{sec:global}


In this final section, we first show that almost all solutions to  \eqref{eq:DESNF}
persist globally and are bounded above in probability. We aim to showcase the potential of the newly developed tools in {\S}\ref{Sec5} and {\S}\ref{sec:pathwise}, as  boundedness  in probability from above  can also be deduced  from Proposition \ref{prop:alternatief} (assuming that the solutions are global). 
Although the proof of Proposition \ref{prop:alternatief}
 is rather concise and requires minimal machinery, there is no need in the method  below for the noise part to be a (local) martingale. This method additionally enables us to  prove that all solutions are indeed global.

  \begin{proposition}\label{thm:boundabove}
 	Consider the non-autonomous stochastic delay differential equation
  \begin{equation}
  \mathrm d Y(t)=\left[-\gamma(t)+r(t)e^{-Y(t)}f\big(e^{Y(t-1)}\big)\right] \mathrm d t+a\left(Y_{t},t\right) \mathrm d t+b\left(Y_{t-},t-\right) \mathrm d L(t),\label{eq:MGeq1}
  \end{equation}
where $L=(L(t))_{t\geq 0}$ is of class \textnormal{(HReg)} with jumps uniformly bounded by  $\zeta\geq 0$. Let   $\lambda_N\geq 0$ denote the rate of the   Poisson process  associated to $L$.
Suppose $f:\mathbb R\to \mathbb R$ is  locally Lipschitz continuous, non-negative on $(0,\infty)$, and bounded from above. Assume $\tilde \gamma:=\inf_{t\geq 0}\gamma(t)>0$ and $\sup_{t\geq 0}r(t)<\infty$.
If there exists non-negative constants $\alpha_{\rm max}\geq 0$ and $ \beta \geq 0$ such that
\begin{equation}
 a(\varphi,\,\cdot\,)\leq  \alpha_{\rm max}  \quad \text { and } \quad b(\varphi,\,\cdot\,)^{2} \leq \beta^{2}, \quad \text { for all } \varphi \in D[-1,0],
  \end{equation}
  and if
  \begin{equation}\label{eq:byproduct}
  	\tilde \gamma >\alpha_{\rm max}+\lambda_N \zeta\beta,
  \end{equation}
  then for any $\Phi \in \mathbb D[-1,0]$ we have that the solution to equation \eqref{eq:MGeq1}  with $Y_0=\Phi$ persists globally. Furthermore, any solution is bounded above in probability. 
\end{proposition}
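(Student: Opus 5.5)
The plan is to apply the pathwise estimate of Lemma \ref{prop:MG} to each sample path, using $v(t)=\int_0^t a(Y_s,s)\,\mathrm ds+\int_{(0,t]}b(Y_{s-},s-)\,\mathrm dL(s)$ as the regulated forcing. Then Proposition \ref{prop:probabove} will control the resulting random expression.

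\textbf{Setup.} Local existence and uniqueness up to a maximal time $T_\infty$ come from \cite[Sec. 2]{artikel1-general}. Since $a,b$ are bounded, blow-up to $-\infty$ in finite time cannot happen: there the drift term $re^{-Y}f(\cdot)\ge 0$ only pushes upward. So it suffices to exclude explosion to $+\infty$ and to bound $Y$ from above in probability.

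\textbf{Absorbing the drift into the forcing.} Rewrite the equation as
\[
Y(t)=Y(0)+\int_0^t\big[-\gamma(\theta)+\alpha_{\rm max}+r(\theta)e^{-Y(\theta)}f(e^{Y(\theta-1)})\big]\mathrm d\theta+v(t),
\]
where now $v(t)=\int_0^t(a(Y_s,s)-\alpha_{\rm max})\,\mathrm ds+\int_{(0,t]}b\,\mathrm dL$. This $v$ is regulated with jumps at most $\zeta\beta$, since $|b|\le\beta$ and $|\Delta L|\le\zeta$. Its drift coefficient $-(a-\alpha_{\rm max})$ is nonnegative. Apply Lemma \ref{prop:MG} with $\gamma$ replaced by $\gamma-\alpha_{\rm max}$, whose infimum is $\tilde\gamma-\alpha_{\rm max}>\lambda_N\zeta\beta$. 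Choose $R>Y(0)$ large enough that
\[
\alpha' := \tilde\gamma-\alpha_{\rm max}-\tilde rMe^{-R}
\]
still exceeds $\lambda_N\zeta\beta$; this uses \eqref{eq:byproduct}. The lemma then gives
\[
Y(t)\le R+\zeta\beta+\sup_{0\le\theta\le t}\big(v(t)-v(\theta)-\alpha'(t-\theta)\big).
\]
This holds pathwise for $t<T_\infty$. The case $Y(0)\ge R$ is handled by intersecting with the event $\{\Phi(0)<R\}$, whose probability tends to one as $R\to\infty$.

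\textbf{Identifying the Section \ref{Sec5} process.} The supremum is exactly the reverse time supremum \eqref{eq:reverse-time-sup} of the process
\[
\tilde Y(t)=-\int_0^t\big(\alpha'+\alpha_{\rm max}-a(Y_s,s)\big)\,\mathrm ds+\int_{(0,t]}b\,\mathrm dL .
\]
Its drift coefficient is at least $\alpha'>\lambda_N\zeta\beta$, and $b^2\le\beta^2$. Two technical points need care:
\begin{itemize}
\item The integrands must be admissible (adapted, càglàd, as in \S\ref{sec:integrands}).
\item $Y$ is only defined up to $T_\infty$. I would stop at $\tau_n=\inf\{t:|Y(t)|\ge n\}$ and extend the integrands beyond $\tau_n$ by the constants $a=\alpha_{\rm max}$ and $b=0$.
\end{itemize}
With this, Proposition \ref{prop:probabove} shows that the reverse time supremum is bounded above in probability, uniformly in $n$.

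\textbf{Conclusion and main obstacle.} Global existence follows because on $\{T_\infty<\infty\}$ we would need $\sup_{t\le T}Y(t)=\infty$ for some finite $T$. But the reverse time supremum of the extended process is a.s. finite on compact intervals, since it is càdlàg. Hence $\mathbb P(\tau_n\le T)\to0$ for every $T$, so $T_\infty=\infty$ a.s. Boundedness above in probability then follows from $\mathbb P(Y(t)\ge 2R+\zeta\beta)\le \mathbb P(\Phi(0)\ge R)+\mathbb P(\text{rev.\ sup}\ge R)$, uniformly in $t$. I expect the main obstacle to be the localisation step: checking that the bounds of Proposition \ref{prop:probabove} hold uniformly for the stopped, extended processes, and that $v$ is genuinely regulated, so that Lemma \ref{prop:MG} applies pathwise.
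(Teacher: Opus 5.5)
Your route is the paper's: Lemma~\ref{prop:MG} applied pathwise, then Proposition~\ref{prop:probabove} for $-\int_0^t(\alpha'+\alpha_{\max}-a)\,\mathrm ds+\int_0^t b\,\mathrm dL$. That process is exactly the paper's $U(t)=-\alpha t+V(t)$ with $\alpha=\alpha'+\alpha_{\max}$, so moving $\alpha_{\max}$ into $\gamma$ is cosmetic. You also restrict to $\{\Phi(0)<R\}$, as the paper does. Given global existence, your bound above in probability is correct, and freezing $a=\alpha_{\max}$, $b=0$ after $\tau_n$ is a cleaner localisation than the paper's. The global-existence step, however, has two gaps.

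First, you exclude blow-up to $-\infty$ because ``$a,b$ are bounded'', but the proposition only assumes $a\le\alpha_{\max}$. A drift unbounded below can push $Y$ to $-\infty$ in finite time. Take $b=0$ and the locally Lipschitz $a(\varphi)=-e^{-2\varphi(0)}$: once $Y\le-\ln(2\tilde rM)$ one has $Y'\le-\tfrac12 e^{-2Y}$, i.e.\ $(e^{2Y})'\le-1$. So this step needs a lower bound on $a$, as in Proposition~\ref{thm:boundbelow} or as with $a=-\tfrac12b^2$, $b$ bounded, in Theorem~\ref{main-thrm}. Alternatively it needs the result the paper invokes for it, \cite[Prop.~3.4]{artikel1-general}. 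Even with $a\ge-\alpha_{\min}$, the lower bound $Y(t)\ge\Phi(0)-\int_0^t\gamma(s)\,\mathrm ds-\alpha_{\min}t+\int_0^t b\,\mathrm dL$ only rules out explosion once you have the uniform maximal estimate described next.

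Second, against blow-up to $+\infty$ you argue that the reverse time supremum $S_n$ of the $n$-th extended process is a.s.\ finite on $[0,T]$ because it is \cadlag, and hence $\mathbb P(\tau_n\le T)\to0$. This does not follow. You need $\mathbb P\bigl(\sup_{t\le T}S_n(t)\ge n-R-\zeta\beta\bigr)\to0$, i.e.\ control of the running maximum uniformly in $n$. Almost sure finiteness for each fixed $n$ gives nothing as $n\to\infty$. Proposition~\ref{prop:probabove} only bounds $\sup_{t}\mathbb P(S_n(t)\ge R)$ at fixed times, not $\sup_{t\le T}S_n(t)$.

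The fix is the one the paper uses. Since $a\le\alpha_{\max}$, the drift contribution to $S_n$ is non-positive. Setting $I_n(s)=\int_0^{s\wedge\tau_n}b\,\mathrm dL$, you get $Y(t\wedge\tau_n)\le R+\zeta\beta+\sup_{s\le T}I_n(s)+\sup_{s\le T}\bigl(-I_n(s)\bigr)$ for $t\le T$ on $\{\Phi(0)<R\}$. The right-hand side is bounded in probability uniformly in $n$, either by Lemma~\ref{thm-2} applied to $\pm b\mathbf 1_{(0,\tau_n]}$ (its constants depend only on $\beta$, $T$ and the law of $L$) or by Doob's $L^2$ inequality.
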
 
\begin{proof}
	Let us denote by  $(Z,T_\infty)$   the maximal local solution to  equation \eqref{eq:MGeq1}, subject to any  initial process $\Psi\in\mathbb D[-1,0]$ with $\Psi(0)$   bounded from above by some  constant $R'>0.$	
  Proposition 3.4 of \cite{artikel1-general} tells us  that solutions can only tend to $+\infty$ in finite time.

We proceed by introducing the  process $(V(t))_{-1\leq t<\infty}$, defined for $0\leq t<T_\infty$ by
  \begin{equation}
  V(t)=\int_{0}^{t} a\left(Z_s,s\right) \mathrm d s+\int_{0}^{t} b\left(Z_{s-},s-\right) \mathrm dL(s),\label{eq:thisform}
  \end{equation}
  while $V(t)=0$ for $t\in [-1,0]$ and $t\in [T_\infty,\infty).$
Consider a sequence of stopping times $(T_k)_{k\geq 1}$   as in \cite[Def. 2.1]{artikel1-general}. Then, for every  $t \geq t_{0} \geq 0$, we have
  \begin{equation}
  	Z(t\wedge T_k)=Z(t_{0})+  \int_{t_{0}}^{t\wedge T_k}\left[-\gamma(s)+ r(s)  e^{-Z(s)}f\big(e^{Z(s-1)}\big)\right]\mathrm d s \\
  	  +V(t\wedge T_k)-V(t_{0}\wedge T_k),
  \end{equation}
for all integers $k\geq 1$.
  Choose $R>0$  large enough such that it satisfies
  \begin{equation}
  \alpha :=\tilde \gamma-\tilde r M e^{-R}>\alpha_{\textnormal{max}}+\lambda_N\zeta\beta\quad\text{and}\quad Z(0)=\Psi(0)<R, 
  \end{equation}
 where $M$ is an upper bound for the nonlinearity $f$. Over here we use the assumption that  $\Psi(0)$ is bounded from above by some constant $R'>0$.
  
  We are  now ready to exploit Lemma \ref{prop:MG} in a pathwise manner. We have, for every $t \geq 0$, that there exists\footnote{Indeed, the random variable is given by  $
	a^t=\sup\{s\in[0,t]:Z(s)<R\}.$}  a random variable $a^{t}\in[0,t]$  such that $\mathbb P$-a.s.\ the inequality
  \begin{equation}
  Z(t\wedge T_k) \leq \max \big\{R,R+\beta\zeta-\alpha (t-a^{t})+V(t\wedge T_k)-V (a^{t} \wedge T_k)\big\} 
  \end{equation}
holds,  for any integer $k\geq 1.$ Since $\alpha>\alpha_{\rm max},$ we particularly find
\begin{equation}
    \sup_{0\leq s\leq t} Z(t\wedge T_k)\leq R+\beta\zeta +\sup_{0\leq s<t\wedge T_\infty}\left|\int_0^tb(Z_{s-},s-)\,\mathrm dL(s)\right|,
\end{equation}
for any $k\geq 1.$ Along the lines of the proof of \cite[Prop. 3.4]{artikel1-general}, we conclude that finite time blowups towards $-\infty$ can only occur, hence there is no finite time blowup at $t=T_\infty$ (and  $Z(t)$ can therefore be extended to a global solution). 

The observations above imply that $\mathbb P$-a.s.\ we have
  \begin{equation}
  Z(t) \leq \max \left\{R,R+\zeta-\alpha \left(t-a^{t}\right)+V(t)-V (a^{t} )\right\}.\label{eq:neglect}
  \end{equation}
%
Introduce the process $U=(U(t))_{t\geq 0}$, defined by
  \begin{equation}
  U(t):=-\alpha  t+V(t), \quad t \geq 0.
  \end{equation}
  Then, for all $t \geq 0$, we obtain 
  \begin{equation}\label{theequation}
  	Z(t)  \leq \max \left\{R, R+\zeta+ U(t)-U (a^{t} )\right\} \\
  	\leq \max \left\{R, R+\zeta+\sup _{0 \leq s \leq t}(U(t)-U(s))\right\}.
  \end{equation}
  Observe that
  \begin{equation}
  U(t)=-\int_{0}^{t}\big(\alpha -a\left(Z_s,s\right)\big) \,\mathrm d s+\int_{0}^{t} b\left(Z_{s-},s-\right) \mathrm d L(s),\quad t\geq 0,
  \end{equation}
is a Lévy-driven process with negative drift. In particular, we have
  \begin{equation}
  \alpha -a\left(Z_{s},s\right) \geq \alpha -\alpha_{\textnormal{max}} >\lambda_N\zeta\beta\quad\text{and}\quad b\left(Z_{s-},s-\right)^{2} \leq \beta^{2},\quad s\geq 0.
  \end{equation}
  This allows us to appeal to Proposition \ref{prop:probabove} and we infer that
  $(\sup _{0 \leq s \leq t}(U(t)-U(s)))_{t \geq 0}$ is bounded above in probability. Together with  pathwise estimate \eqref{theequation}, we conclude|under the assumption that $\Psi(0)$ is bounded from above everywhere|that $(Z(t))_{t \geq 0}$ is bounded above in probability.

Now suppose the initial process $\Psi$ is arbitrary. It may be  possible that we  cannot bound $\Psi(0)$ from above uniformly on $\Omega$. In that case, define for every $R'>0$ the measurable set
  \begin{equation}
  	\Omega_{R'}:=\{\omega\in\Omega:\Psi(0,\omega)<R'\}.
  \end{equation}
The above implies that $(Z(t)\mathbf 1_{\Omega_{R'}})_{t \geq 0}$ is bounded in probability. Since $\mathbb P( \Omega_{R'}^c)\to 0$   as $R'\to\infty$, we  deduce that   $(Z(t))_{t \geq 0}$ is bounded above in probability, irrespective  of the initial data. 
\end{proof}



\begin{remark}	If the process $L=(L(t))_{t\geq 0}$  in Proposition \ref{thm:boundabove} is    a Brownian motion,   notice that the condition  in \eqref{eq:byproduct} simplifies to $\tilde \gamma >0$, which is conform    Proposition \ref{prop:alternatief}. 
\end{remark}
 
  
We are now ready to show the existence of an invariant measure  in  case  of $f(0)>0.$ In short, it suffices to show  that there is at least one initial condition $Y_0=\Phi\in\mathbb D[-1,0]$ for which the solution is bounded below in probability, as a result of Proposition \ref{thm:boundabove}. We will prove that all solutions are bounded below in probability, which  could also be achieved for non-autonomous systems. The tools  in {\S}\ref{Sec5} and {\S}\ref{sec:pathwise} are primarily developed to study  solutions close to $-\infty;$ see also \cite{BosGaaVer1-24}.




 \begin{proposition}
 \label{thm:boundbelow}
Consider the autonomous stochastic delay differential equation
  \begin{equation}
  \mathrm d Y(t)=\left[-\gamma+re^{-Y(t)}f\big(e^{Y(t-1)}\big)\right] \mathrm d t+a\left(Y_{t}\right) \mathrm d t+b\left(Y_{t-}\right) \mathrm d L(t),\label{eq:MGeq2}
  \end{equation}
where $\gamma,r>0.$ In addition to the assumptions in Proposition \ref{thm:boundabove}, let $f(0)>0$.
If there also exists a non-negative constant $ \alpha_{\rm min}\geq 0$ such that
\begin{equation}
 a(\varphi,\,\cdot\,)\geq  -\alpha_{\rm min}, \quad \text { for all } \varphi \in D[-1,0],
  \end{equation}
  then  all solutions persist globally and are  bounded in probability. Furthermore, there is an invariant measure, hence at least one stationary solution, to negative feedback system \eqref{eq:MGeq2}. 
\end{proposition}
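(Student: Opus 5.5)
The plan is to mirror the proof of Proposition \ref{thm:boundabove}, now on the lower side. Global existence and boundedness above in probability are already provided by Proposition \ref{thm:boundabove}, since its hypotheses are assumed. It therefore remains to show that every solution is bounded below in probability. Existence of an invariant measure then follows from Proposition \ref{cor:application}. Indeed, $|a|\le \max\{\alpha_{\rm max},\alpha_{\rm min}\}$ and $b^2\le\beta^2$, all solutions are global, and at least one is bounded in probability. (Strictly, Proposition \ref{cor:application} asks for $f$ positive on $(0,\infty)$; I expect its proof to use only boundedness in probability of one solution plus tightness of the segments, so non-negativity should suffice. I would remark on this rather than reprove it.)

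For the lower bound, let $Z$ be the global solution with initial process $\Psi$, and first assume $\Psi(0)>-R'$ for a deterministic $R'$. Define $V(t)=\int_0^t a(Z_s)\,\mathrm ds+\int_0^t b(Z_{s-})\,\mathrm dL(s)$, exactly as in \eqref{eq:thisform}. Then $z=Z(\omega)$ satisfies the integral equation \eqref{eq:integral1} with $t_0=0$ and $v=V(\omega)$. This $v$ is pathwise regulated with jump height at most $\beta\zeta$, since jumps of $V$ are $b(Z_{s-})\Delta L(s)$. I would not use Corollary \ref{cor:q=0} directly, because the drift of $V$ could be positive or negative. Instead I would apply Lemma \ref{thm:q=0} with $F(x,y)=r f(x)y^{-1}-\gamma+\gamma'$, where $\gamma'>0$ is chosen below, and with $v(t)=V(t)+\gamma' t$ (the $-\beta$ term of the lemma being $-\gamma'$). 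The hypotheses are checked as follows. $F\ge -\gamma$ everywhere since $f\ge0$, so $C_F=\gamma$. Also, $f(0)>0$ and continuity give $\delta>0$ with $F(x,y)\ge\gamma'$ on $(0,\delta)^2$. Taking $R\ge R'$ with $e^{-R}<\delta$, the lemma gives pathwise
\[
Z(t)\ge \min\Bigl\{-R,\,-R-\gamma-\gamma'-\beta\zeta+\inf_{0\le s\le t}\bigl(U(t)-U(s)\bigr)\Bigr\},\qquad U(t):=\gamma' t+V(t).
\]

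Then $-U(t)=-\int_0^t(\gamma'+a(Z_s))\,\mathrm ds+\int_0^t(-b(Z_{s-}))\,\mathrm dL(s)$ is a L\'evy-driven process with negative drift. Its drift coefficient satisfies $\gamma'+a\ge\gamma'-\alpha_{\rm min}$, so I choose $\gamma'>\alpha_{\rm min}+\lambda_N\zeta\beta$. The integrand satisfies $b^2\le\beta^2$. Proposition \ref{prop:probabove} then shows that $\sup_{0\le s\le t}(-U(t)+U(s))$ is bounded above in probability, so $Z(t)$ is bounded below in probability. Arbitrary $\Psi$ is handled via the sets $\Omega_{R'}=\{\Psi(0)>-R'\}$, exactly as at the end of the proof of Proposition \ref{thm:boundabove}.

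The main obstacle I expect is the bookkeeping of the artificial drift $a$, which may be positive. It must be absorbed by shifting a constant $\gamma'$ between $F$ and $v$, while keeping the threshold $\delta$ (and hence $R$) deterministic. The shift is legitimate precisely because $f(0)>0$ makes $rf(x)/y$ arbitrarily large near the origin. A secondary issue is making sure Lemma \ref{thm:q=0} is applied pathwise to a genuinely c\`adl\`ag regulated $v$, with jump bound $\beta\zeta$ in place of $\zeta$.
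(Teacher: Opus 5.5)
This is essentially the paper's proof. The paper takes $A>\alpha_{\rm min}+\lambda_N\zeta\beta$ and puts the drift $At$ into the noise process, which is your $U$ with $\gamma'=A$. It then applies Corollary \ref{cor:q=0} with $\gamma$ replaced by $\gamma_A=\gamma+A$, followed by Proposition \ref{prop:probabove} and Proposition \ref{cor:application}. So your reason for avoiding the corollary does not hold: this constant shift is exactly how the paper uses it, and it gives the same bound as yours.

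There is one bookkeeping slip to fix. With $F(x,y)=rf(x)y^{-1}-\gamma+\gamma'$, the lemma's drift $-\gamma'(t-t_0)$ and $v=V+\gamma' t$, the right-hand side of the lemma's integral equation exceeds the equation for $Z$ by $\gamma'(t-t_0)$. Take $F(x,y)=rf(x)y^{-1}-\gamma$ instead. You still have $C_F=\gamma$, and $f(0)>0$ gives $F\ge\gamma'$ on $(0,\delta)^2$ for small $\delta$. This yields exactly your displayed lower bound.

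Your caveat that Proposition \ref{cor:application} is stated for $f$ positive on $(0,\infty)$ is fair; the paper applies it to non-negative $f$ without comment.
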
 

 \begin{proof} As before, we may assume without loss that $\Psi\in\mathbb D[-1,0]$ is bounded from below  uniformly on $\Omega$. Let $Z=(Z(t))_{-1\leq t<\infty}$ be  the solution to  \eqref{eq:MGeq2} with $Z_0=\Psi$, which
persists globally  and is bounded above in probability thanks to Proposition \ref{thm:boundabove}.
 
 Consider any real number $A>\alpha_{\textnormal{min}}+\lambda_N \zeta\beta $ and  introduce the process $V=(V(t))_{-1\leq t<\infty}$, defined for $t\geq 0$ by 
  \begin{equation}
  V(t)=
  	A t+\int_{0}^{t} a\left(Z_{s},s\right) \mathrm d s+\int_{0}^{t} b\left(Z_{s-},s-\right)\mathrm  d L(s), 
  \end{equation} 
  with $V(t)=0$ for $t\in[-1,0]$.
For every $t\geq t_0\geq 0$, we have
\begin{equation}
	Z(t )=Z(t_{0})+  \int_{t_{0}}^{t }\left[-(\gamma+A)+ r  e^{-Z(s)}f\big(e^{Z(s-1)}\big)\right]\mathrm d s \\
	+V(t)-V(t_{0}).
\end{equation}
Let us define the artificial mortality rate $\gamma_{A}=\gamma+A>0$.  
 Next,   take $R>0$    sufficiently large  such that that $\Psi(0)\geq - R$ holds.
Thanks to Corollary \ref{cor:q=0}, we obtain that for every $t \geq 0$ there exists a random variable  $a^{t}$ with values between 0 and $t$ such that
  \begin{equation}
  Z(t) \geq\min \big\{-R,-R-\gamma_A-\beta\zeta+V(t)-V (a^{t} )\big\}.
  \end{equation}
Now, for notational convenience, set $U(t):=-V(t),$ for $t\geq 0.$ This yields
  \begin{equation}
  Z(t) \geq\min\left\{-R,-R-\gamma_{A} -\beta \zeta-\sup _{0 \leq s \leq t}(U(t)-U(s))\right\}.
  \end{equation}
Observe that
  \begin{equation}
	U(t)=-\int_{0}^{t}\big(A+a\left(Z_s,s\right)\big)\, \mathrm d s+\int_{0}^{t}\big(- b\left(Z_{s-},s-\right)\big) \,\mathrm d L(s),\quad t\geq 0,
\end{equation}
is a Lévy-driven process with negative drift, since
\begin{equation}
	A+a\left(Z_{s},s\right) \geq A-\alpha_{\textnormal{min}} >\lambda_N\zeta\beta\quad\text{and}\quad \big(-b(Z_{s-},s-)\big)^{2} \leq \beta^{2},\quad s\geq 0.
\end{equation}
Proposition \ref{prop:probabove}, once again, tells us that
  $(\sup _{0 \leq s \leq t}(U(t)-U(s)))_{t \geq 0}$ is bounded above in probability.
This implies that $Z$ is bounded   in probability from below. An application  of  Corollary \ref{cor:application}   yields the existence of an invariant measure.
\end{proof}

\begin{proof}[Proof of Theorem \ref{main-thrm}] After the time transformation \eqref{eq:transformation} and setting $a=-\frac12b^2$, as in \eqref{eq:neg-drift}, we see that the result readily follows from combining Propositions \ref{thm:boundabove}  and  \ref{thm:boundbelow} for $f(0)>0$ and from Proposition  \ref{thm:boundabove} and Corollary \ref{cor:application} for $f(0)=0$.
\end{proof}


\printbibliography

\color{black}

\end{document}